\newcommand{\N}{\mathbb{N}}
\newcommand{\C}{\mathbb{C}}
\newcommand{\Ac}{\mathcal{A}}
\newcommand{\Mc}{\mathcal{M}}
\newcommand{\Rc}{\mathcal{R}}
\newcommand{\Lc}{\mathcal{L}}
\newcommand{\Rf}{\mathfrak{R}}
\newcommand{\dom}{{\rm dom\,}}
\newcommand{\mul}{{\rm mul\,}}
\newcommand{\ran}{{\rm ran\,}}
\newcommand{\Span}{{\rm span\,}}
\newcommand{\codim}{{\rm codim\,}}
\DeclareMathOperator{\gr}{graph}
\newenvironment{smallpmatrix}
{\left(\begin{smallmatrix}}
{\end{smallmatrix}\right)}
\newenvironment{smallbmatrix}
{\left[\begin{smallmatrix}}
{\end{smallmatrix}\right]}
\pgfplotsset{compat=1.14}
\theoremstyle{plain}
\newtheorem{defi}{Definition}[section]
\newtheorem{theorem}[defi]{Theorem}
\newtheorem{prop}[defi]{Proposition}
\newtheorem{lemma}[defi]{Lemma}
\newtheorem{cor}[defi]{Corollary}
\newtheorem{ex}[defi]{Example}
\title{The spectrum and the Weyr characteristics of operator pencils and linear relations
 \thanks{H.G. acknowledges the support by the Deutsche Forschungsgemeinschaft (DFG) within the Research Training Group GRK 2583 "Modeling, Simulation and Optimization of Fluid Dynamic Applications”.}
        }
\author{Hannes Gernandt\thanks{Institut für Mechanik und Meerestechnik, TU Hamburg, Ei\ss endorfer Stra\ss e 42, 21073 Hamburg, Germany ({\tt hannes.gernandt@tuhh.de}).}
        \and Carsten Trunk \thanks{Institute of Mathematics, TU Ilmenau, Weimarer Stra\ss e 25, 98693 Ilmenau, Germany (
{\tt carsten.trunk@tu-ilmenau.de}).}}
\begin{document}

\maketitle

\begin{abstract}
The relation between the spectra of operator pencils with unbounded coefficients and of  associated  linear relations is investigated.
It turns out that various types of spectrum coincide
and the same is true for the Weyr characteristics. This characteristic describes how many independent Jordan chains up to a certain length exist. Furthermore, the change of this characteristic subject to one-dimensional perturbations is investigated.
\end{abstract}
\textit{Keywords:} Weyr characteristic, operator pencil, linear relation, essential spectrum, rank-one perturbation.\\







\section{Introduction}
In this paper we study the relation between the spectrum of operator pencils and that of linear relations.
Operator pencils are often used to model partial differential-algebraic equations
\begin{align}
\label{dae}
\tfrac{d}{dt}Ex(t)=Ax(t), \quad Ex(0)=Ex_0,
\end{align}
which can be viewed as coupled equations of partial differential equations subject to linear constraints.

Here the operators $E$ and $A$ are between Banach spaces $X,Y$ and typically $E$ is bounded and $A$ is closed and densely defined~\cite{CarrShow77,FaviYagi98,ReisTisc05}. There are many examples from mechanics and electro-dynamics where partial differential-algebraic equations
\eqref{dae} can be derived.

Qualitative properties of the solution of a classical Cauchy problem are
usually described by the spectral properties of the generator.
Similarly, the spectrum of the operator pencil
$$
\Ac(\lambda):=\lambda E-A
$$
describes qualitative properties of the solutions of \eqref{dae}.

For this reason we study in this paper the spectrum of operator pencils and investigate the change of the spectral properties under small perturbations. Here small means small in rank.
In the remainder we always restrict ourselves to the following class of operator pencils $\lambda E-A$:
\begin{equation}
	\label{torte}
	\begin{cases}
		 \text{ The spaces }X \text{ and } Y  \text{ are Banach spaces},\\
		\text{ the operator } E:X\rightarrow Y \text{  is bounded,}\\
	\text{ the operator } A:X\supset \dom A\rightarrow Y \text{ is closed and densely defined and}\\
	\text{ there exists } \mu\in\C \text{ with }
	\mu E-A \text{ maps }\dom A \text{ bijectively to } Y.
	\end{cases}
\end{equation}
These assumptions are motivated by abstract differential-algebraic equations see e.g.\ \cite{CarrShow77,FaviYagi98,ReisTisc05}.
The \emph{spectrum} $\sigma(\Ac)$ of an operator pencil $\Ac(\lambda):=\lambda E-A$  is the complement of the set of all
\text{resolvent points}  $\rho(\Ac)$,
$$
\sigma(\Ac) :=
(\C\cup\{\infty\})\setminus\rho(\Ac),
$$
where  $\mu\in\C$ is in $\rho(\Ac)$
if $(\mu E-A)^{-1}$ exists and is a bounded, everywhere defined operator. Furthermore, $\infty$ is a resolvent point if $E$ has a bounded and everywhere defined inverse and $A$ is bounded.
Operator pencils with resolvent points are called \emph{regular}.
In particular, if \eqref{torte} holds, then $\rho(\Ac) \neq \emptyset$, which is due
to the Closed Graph Theorem and $\Ac$ is a regular operator pencil.
It is easy to see  that  the \emph{resolvent set} $\rho(\Ac)$ is open
(with respect to the topology of the compactification of $\mathbb C$).
A complex number $\lambda$ is in the \emph{point spectrum} $\sigma_p(\Ac)$ if $\ker(\lambda E-A)\neq\{0\}$ and $\infty\in\sigma_p(\Ac)$ if $\ker E\cap\dom A\neq\{0\}$.
We say that $\lambda\in\C$ is in the \emph{essential spectrum} $\sigma_{e}(\Ac)$ if $\Ac(\lambda)$ is not a Fredholm operator. Furthermore, $\infty\in\sigma_{e}(\Ac)$ if $E|_{\dom A}$ is not a Fredholm operator.

In this note we tudy the spectrum of operator pencils subject to two types of perturbations:
For $u,w\in Y$ and $v',w'$ in the dual space $X'$
\begin{align}
\label{rk_one_intro}
\hat \Ac(\lambda)=\Ac(\lambda)+\lambda uv'-uw'\quad \mbox{and} \quad
\hat \Ac(\lambda)=\Ac(\lambda)+\lambda uv'-wv'.
\end{align}
The above perturbations are small in the sense that they have either a one-dimensional range or a kernel of co-dimension one. In finite dimensional spaces i.e.\ $X=\C^n$ and $Y=\C^n$ the above perturbations are called \emph{rank-one perturbations} and  $\Ac(\lambda)=\lambda E-A$ with $E,A\in\C^{n\times n}$ is called \emph{matrix pencil}. In this case, the change of spectral properties pencils, is well-investigated \cite{Bara20,BaraDodi20,BaraRoca19,Batz15,DopMordTe2008,GernTrunk17}. Recently, the change of the spectrum of self-adjoint operators in Hilbert spaces under perturbations \eqref{rk_one_intro} with $v'=0$ or $u=0$ was described in \cite{DoboHryn212,DoboHryn21}.

In \cite{GernMoal20} the authors studied the change of various types of essential spectra of operator pencils under compact perturbations in the sense of linear relations which includes \eqref{rk_one_intro}. However, therein it was assumed that $X$ and $Y$ are Hilbert spaces and that $A$ is bounded.

The essential spectrum of operator pencils is also studied under the name S-essential spectrum, see \cite{AbdmAmma13,Jeri14} for related perturbation results and \cite{Jeri15} for an overview.

In this paper we focus on the change of eigenvalues of the operator pencil and its associated root subspaces $\Rf_\lambda(E,A)$. The definition of these root subspaces is based on the notion of Jordan chain for operator pencils from \cite{Keld51}. If we collect all Jordan chains up to a certain length $k\geq 1$ in the subspace $\Rf_\lambda^k(E,A)$ then we trivially have $\Rf_\lambda^k(E,A)\subseteq\Rf_\lambda^{k+1}(E,A)$. More importantly, the spectral structure of the pencil at $\lambda$ can characterized using the numbers
\[
w_k(\lambda;E,A):=\dim\frac{\Rf_\lambda^k(E,A)}{\Rf_{\lambda}^{k+1}(E,A)},\quad k\geq 1.
\]
The collection of these numbers for all $k\geq 1$ and all eigenvalues $\lambda$ is called the \emph{Weyr characteristic} of the operator pencil.
This characteristic was introduced in \cite{Weyr85} for matrices and studied by many authors \cite{AitkTurn04,MacD33,Shap99}. It was later generalized to a special class of matrix pencils in \cite{KarcKalo86}, see also \cite{Bole98}.

One of the main results of this paper is that the Weyr characteristic of the operator pencils $\Ac(\lambda)=\lambda E-A$ and $\hat \Ac(\lambda)=\lambda \hat E-\hat A$ related by \eqref{rk_one_intro} changes according to
\begin{align}
\label{weyr_one}
|w_k(\lambda;E,A)-w_k(\lambda;\hat E,\hat A)|\leq 1.
\end{align}
Similar results have been obtained previously for pencils in finite dimensions in \cite{DopMordTe2008,GernTrunk17}, for matrices in \cite{Savc03}, and for operators in \cite{BehrLebe15,HoerMell94}. The bound \eqref{weyr_one} can be used to bound the number of  eigenvalues and their multiplicities after low-rank perturbations. Another question emerging from this is the possibility of eigenvalue assignment using only a certain (structured) class of rank-one  perturbations. This problem is of high interest in the field of circuit redesign problems outlined in \cite{SommKrau12} and studied in further detail in \cite{GernKrau17,GernTrunk17}. In this context, the rank-one perturbations \eqref{rk_one_intro} correspond to capacitors which are inserted  in a given circuit with the aim of improving some of its properties.

The proof of \eqref{weyr_one} in finite dimensions are based on invariant factors and their perturbations and, hence, cannot be applied here. We follow a different approach which is based on a
recent perturbation result on the Weyr characteristic of linear relations from \cite{LebeMart2018}. Linear relations are just subspaces of  $X\times X$, which allow a rich spectral theory, see e.g.\ \cite{AJ21,BehrHassdeSn20,Cross1998}. Here
we associate with an operator pencil $\Ac(\lambda)=\lambda E-A$ the two subspaces
\begin{equation*}
\ker[A,-E|_{\dom A}] \quad \mbox{and} \quad
\ran\begin{smallbmatrix}E|_{\dom A}\\A\end{smallbmatrix},
\end{equation*}
which are called, for obvious reasons,  the \emph{kernel} and
the \emph{range representation}, see also \cite{BergTrun16}. As an important
step on the way to prove \eqref{weyr_one}, we show in Theorem~\ref{thm:eq_weyr} below
that the Weyr characteristic of the operator pencil which satisfies~\eqref{torte}
coincides with the corresponding Weyr characteristic for the range and
the Weyr characteristic for the kernel representation. This result is of independent
interest and will be very useful in a further study of spectral properties
of operator pencils with unbounded coefficients.

\section{Linear relations}
\label{sec:prelim}

Linear relations are subspaces of  $X\times X$, where $X$ is a Banach space. Here subspaces are not necessarily closed. If the subspace is closed in $X\times X$, then it is called \emph{closed linear relation}.  For an introduction to linear relations see e.g.\ \cite{AJ21,BehrHassdeSn20,Cross1998}. A frequently used
linear relation is the graph of a linear operator $A$ in $X$
defined on it's domain $\dom A\subset X$,
\begin{equation*}
    \gr A:=\{ (x,Ax) \in X\times X ~|~ x\in \dom A\}.
\end{equation*}
It is very common to simplify notation by identifying $A$ with its 
graph. That is, we understand $A$ as a linear relation 
which is in fact given by graph$\, A$. As an example,
consider the identity $I_X$ on the vector space $X$.
If the underlying space is clear from the context, we prefer
to briefly write $I$. Then with the above identification
we have
\begin{equation*}
    I =\{ (x,x) \in X\times X ~|~ x\in X\}.
\end{equation*}

Let $\Lc,\Mc$ be linear relations in $X$, then their \emph{sum}, \emph{product} and \emph{inverse} is defined as
\begin{align*}
\Lc+\Mc&:=\{(x,y_1+y_2)~|~(x,y_1)\in \Lc, (x,y_1)\in \Mc\},\\
\Mc\Lc&:=\{(x,z)~|~(x,y)\in \Lc, (y,z)\in \Mc\},\\
\Lc^{-1}&:=\{(y,x)~|~(x,y)\in \Lc\}.
\end{align*}
Note that the sum and product are associative. Furthermore, the \emph{kernel}, \emph{domain}, \emph{range} and \emph{multi-valued part} of a linear relation $\Lc$ are given by 
\begin{align*}
\ker\Lc&:=\{x\in X ~|~ (x,0)\in\Lc\},& \mul\Lc&:=\ker \Lc^{-1},\\
\dom\Lc&:=\{x\in X ~|~(x,y)\in\Lc~\text{for some $y\in X$}\}, & \ran\Lc&:=\dom \Lc^{-1}.
\end{align*}

There is a well developed spectral theory for closed linear relations, see e.g.\ \cite{BasaCher02,DijSnoo87,DijSnoo87-b}. Starting point for a spectral theory
for closed linear relations is the same as for operators: The expression $\Lc-\lambda I$ which is usually abbreviated as   $\Lc-\lambda$. With the above operations we obtain
\begin{align*}
\Lc-\lambda =\{(x,y-\lambda x) ~|~ (x,y)\in\Lc\}
\end{align*}
and obviously, together with $\Lc$, also $\Lc-\lambda$ 
and, thus,  $(\Lc-\lambda)^{-1}$, are closed linear relations.
We call $\lambda\in\C$ a \emph{resolvent point} if 
\[
\mul(\Lc-\lambda)^{-1}=\{0\},\quad \mbox{and}\quad  \dom(\Lc-\lambda)^{-1}=X,
\]
or,  equivalently,
\[
\ker(\Lc-\lambda)=\{0\},\quad \mbox{and}\quad  \ran(\Lc-\lambda)=X.
\]
In this case, $(\Lc-\lambda)^{-1}$ is a linear operator defined
on $X$ and it is closed (see above). The Closed Graph Theorem
implies that $(\Lc-\lambda)^{-1}$ is the graph of a bounded operator
or, using the above identification, $(\Lc-\lambda)^{-1}$ is a bounded
linear operator.

Furthermore, $\infty$ is called a resolvent point if 
$$
\mul\Lc=\{0\},\quad \mbox{and}\quad \dom\Lc=X.
$$
The set of all resolvent points of a closed linear relation $\Lc$ will be denoted by $\rho(\Lc)$. It is a subset of the extended complex plane
$\mathbb C \cup \{\infty\}$. Its complement in $\C\cup\{\infty\}$ is the \emph{spectrum} $\sigma(\Lc)$. Furthermore, we say that $\lambda\in\C$ is in the \emph{point spectrum} $\sigma_p(\Lc)$ if $\ker (\Lc-\lambda)\neq\{0\}$ and $\infty\in\sigma_p(\Lc)$ if $\mul\Lc\neq\{0\}$.
A complex number $\lambda$ is in the \emph{essential spectrum} $\sigma_e(\Lc)$
if $\Lc-\lambda$ is not a Fredholm relation, i.e.,
either $\Lc-\lambda$
has a non-closed range, or at least one of the two spaces
$\ker(\Lc-\lambda)$ and $\tfrac{X}{\ran(\Lc-\lambda)}$ has infinite
dimension.
Moreover, $\infty\in\sigma_e(\Lc)$ if either $\dom\Lc$ is non-closed or
at least one of the two spaces $\mul\Lc$ and $\tfrac{X}{\dom\Lc}$ has infinite dimension.

Linear relations $\Lc$ with non-empty resolvent set admit a specific 
representation as the range of a $2\times 1$ and as the  kernel 
of a $1\times 2$ operator matrix having entries built out of the identity
operator and the resolvent $(\Lc-\mu)^{-1}$ for some $\mu \in \rho(A)$.
This is the content of the following proposition which was in parts already obtained in \cite{BasaCher02}.

\begin{prop}
\label{lem:ker_is_ran}
Let $\Lc$ be a closed linear relation in a Banach space $X$ with $\mu\in\rho(\Lc)\setminus\{\infty\}$ and $\lambda\in\C$. Then 
\begin{align}
\label{ran_eq_ker}
\Lc-\lambda=\ran\begin{smallbmatrix}
(\Lc-\mu)^{-1}\\ I+(\mu-\lambda)(\Lc-\mu)^{-1}
\end{smallbmatrix}=\ker[
I+(\mu-\lambda)(\Lc-\mu)^{-1},-(\Lc-\mu)^{-1}].
\end{align}
If $\infty\in\rho(\Lc)$ then there exists a bounded operator $L:X\rightarrow X$ such that  $\Lc-\lambda=\gr(L-\lambda)$.
\end{prop}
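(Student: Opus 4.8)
The plan is to verify the two asserted equalities in \eqref{ran_eq_ker} by a direct chase of pairs $(x,y)$, using the definitions of sum, product, inverse, kernel and range from Section~\ref{sec:prelim}, and then to treat the case $\infty \in \rho(\Lc)$ separately. Throughout, fix $\mu \in \rho(\Lc)\setminus\{\infty\}$, so that $R:=(\Lc-\mu)^{-1}$ is a bounded everywhere-defined operator on $X$, and note the elementary identities $\Lc - \lambda = (\Lc-\mu) + (\mu-\lambda)I$ and, for the inverse of $\Lc-\mu$, the equivalence $(x,y)\in \Lc-\mu \iff (y,x)\in R \iff x = Ry$.

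First I would establish the range representation. Take $(x,y)\in \Lc-\lambda$. Then $(x, y+(\lambda-\mu)x)\in \Lc-\mu$, hence $x = R\bigl(y+(\lambda-\mu)x\bigr)$. Setting $z := y + (\lambda-\mu)x \in X$ one gets $x = Rz$ and $y = z + (\mu-\lambda)x = \bigl(I+(\mu-\lambda)R\bigr)z$, so $(x,y)$ lies in the range of the column operator $\begin{smallbmatrix} R \\ I+(\mu-\lambda)R\end{smallbmatrix}$. Conversely, given $z\in X$, put $x:=Rz$, $y:=\bigl(I+(\mu-\lambda)R\bigr)z$; then $(x,z)\in R^{-1}=\Lc-\mu$, i.e. $(x, z+\mu x)\in \Lc$, and $y+\lambda x = z + \mu x$, so $(x, y)\in \Lc-\lambda$. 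This gives the first equality. For the kernel representation, observe that $(x,y)$ belongs to $\ker[\,I+(\mu-\lambda)R,\ -R\,]$ exactly when $\bigl(I+(\mu-\lambda)R\bigr)x - R y = 0$, i.e. $x = R\bigl(y - (\mu-\lambda)x\bigr) = R\bigl(y+(\lambda-\mu)x\bigr)$; by the computation just made this is precisely the condition $(x,y)\in\Lc-\lambda$. (One should be slightly careful that the $1\times2$ kernel is read as $\{(x,y) : (I+(\mu-\lambda)R)x = Ry\}$ and the $2\times1$ range as $\{(Rz, (I+(\mu-\lambda)R)z) : z\in X\}$; with these conventions the two sides of \eqref{ran_eq_ker} are literally the same set, which also re-proves the middle equality.)

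For the final claim, suppose $\infty\in\rho(\Lc)$, so $\mul\Lc=\{0\}$ and $\dom\Lc=X$; since $\Lc$ is closed, $\Lc$ is then the graph of a closed everywhere-defined operator, which is bounded by the Closed Graph Theorem. Writing $L$ for that operator we have $\Lc=\gr L$ and hence $\Lc-\lambda=\gr(L-\lambda)=\gr(L-\lambda I)$, as asserted.

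I do not expect a serious obstacle here: the content is bookkeeping with the relation operations, and the only delicate point is fixing the interpretation of the $1\times2$ and $2\times1$ operator-matrix relations so that the formal ``$\ran$'' and ``$\ker$'' of block operators match the set-theoretic objects defined in Section~\ref{sec:prelim}; once that convention is pinned down, all three equalities reduce to the single change of variable $z = y+(\lambda-\mu)x$ together with $x=Rz$.
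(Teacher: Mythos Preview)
Your proof is correct and follows the same direct element-chase as the paper: both derive the range representation from $\Lc-\lambda=(\Lc-\mu)+(\mu-\lambda)$ via the substitution $z=y+(\lambda-\mu)x$ with $x=Rz$, and then verify the kernel representation by unwinding the condition $(I+(\mu-\lambda)R)x=Ry$. Your argument for the inclusion $\ker[\cdot,\cdot]\subseteq\Lc-\lambda$ is in fact a bit cleaner than the paper's, which takes the same $z=y-(\mu-\lambda)x$ but then re-substitutes and introduces an auxiliary $\hat z\in\ker R$ that is not needed; you also spell out the $\infty\in\rho(\Lc)$ case, which the paper's proof leaves implicit.
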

\begin{proof} As agreed above, we identify operators
with its graphs, 
$$
(\Lc-\mu)^{-1} =\{(x,(\Lc-\mu)^{-1} x) ~|~ x\in X\}
$$
and, hence,
\begin{align}
\label{arcangel}
\Lc-\mu =\{((\Lc-\mu)^{-1} x,x) ~|~ x\in X\}.
\end{align}
This implies 
\[
\Lc-\lambda=\Lc- \mu +(\mu -\lambda)=
\{((\Lc-\mu)^{-1} x,x+(\mu -\lambda)(\Lc-\mu)^{-1} x) ~|~ x\in X\},
\]
which shows the first equation in \eqref{ran_eq_ker}. Moreover,
every element in $\Lc-\lambda$ is of the form 
$((\Lc-\mu)^{-1} x,x+(\mu -\lambda)(\Lc-\mu)^{-1} x)$ for some $x\in X$, hence it is contained in 
$\ker[
I+(\mu-\lambda)(\Lc-\mu)^{-1},-(\Lc-\mu)^{-1}]$. The opposite inclusion is clear if $\lambda=\mu$, see \eqref{arcangel}. Let in the following $\lambda\neq\mu$ and  assume that $(x,y)\in X\times X$ is in the kernel on the right-hand side of \eqref{ran_eq_ker}. Then 
\begin{align}
\label{bottom}
x+(\mu-\lambda)(\Lc-\mu)^{-1}x=(\Lc-\mu)^{-1}y.
\end{align}
Rearranging terms we find
\[
x=-(\mu-\lambda)(\Lc-\mu)^{-1}x+(\Lc-\mu)^{-1}y=(\Lc-\mu)^{-1}(y-(\mu-\lambda)x)
\]
and hence there exists $z\in X$ such that $x=(\Lc-\mu)^{-1}z$. If we plug this into \eqref{bottom} we obtain
\[
(\Lc-\mu)^{-1}(I+(\mu-\lambda)(\Lc-\mu)^{-1})z=(\Lc-\mu)^{-1}y.
\]
Therefore, for some $\hat z\in\ker (\Lc-\mu)^{-1}$ it holds
\[
y=(I+(\mu-\lambda)(\Lc-\mu)^{-1})z+\hat z=(I+(\mu-\lambda)(\Lc-\mu)^{-1})(z+\hat z).
\]
Further, $x=(\Lc-\mu)^{-1}z=(\Lc-\mu)^{-1}(z+\hat z)$. Hence, $(x,y)$ is in the set on the left-hand side in \eqref{ran_eq_ker}.
\end{proof}

\section{The spectrum of the operator pencil and its kernel and range representations}

If in \eqref{dae} $A$ and $E$ are bounded operators (or matrices)
and if, in addition, $E$ is boundedly invertible, then the multiplication
by $E^{-1}$ from the left  (from the right) leads to a standard Cauchy problem
with generator $E^{-1}A$ (resp.\ $AE^{-1}$). If $E$ is no longer invertible,
this procedure can be repeated, where the inverse and the multiplication are in the sense of linear relations. In particular,
one easily computes $E^{-1}A=\ker[A,-E]$ and $AE^{-1}=\ran\begin{smallbmatrix}E\\A\end{smallbmatrix}$.
For obvious reasons, $E^{-1}A$ is called  the \emph{kernel} and $AE^{-1}$
the \emph{range representation}, \cite{BergTrun16}.

Some extra care is needed for unbounded $A:X\supset \dom A\rightarrow Y$. For unbounded $A$
(and bounded $E$) the linear relation $\ran\begin{smallbmatrix}E\\A\end{smallbmatrix}$, or,
to be more precise, $\ran\begin{smallbmatrix}E|_{\dom A}\\A\end{smallbmatrix}$, has domain $\dom A$
and it is a subspace of $Y\times Y$.

For unbounded $A$ (and bounded $E$) the linear relation 
$\ker[A,-E]$ is a subset of $\dom A \times X$. It is the goal to relate the spectrum of $\ker[A,-E]$ to
the spectrum  of the operator pencil $\Ac(\lambda)=\lambda E-A$. In order 
to define spectrum for the expression $\ker[A,-E]$, it has to be a subset of the Cartesian product 
of the same space, that is, one has to restrict $\ker[A,-E]$ to
$\dom A\times \dom A$. Moreover, together with the graph norm of $A$,
$\dom A$ turns into a Banach space. This is the content of the following.

Let the operator pencil $\Ac(\lambda)=\lambda E-A$ 
satisfies \eqref{torte}. We consider the restriction $E|_{\dom A}$ of $E$ to $\dom A$
and its inverse,
\[
E|_{\dom A}^{-1}=\{(Ex,x)~|~ x\in \dom A\}.
\]
A short calculation reveals that
\begin{align}\nonumber
E|_{\dom A}^{-1}A&=\{(x,z)\in \dom A\times \dom A: (x,y)\in A,\quad (y,z)\in E|_{\dom A}^{-1} \}\\\label{Wolfine}
&=\{(x,z) \in \dom A\times \dom A: Ax=Ez,\quad z\in\dom A \}\\\label{WolfineII}
&=\ker[A,-E|_{\dom A}] \subset  \dom A\times \dom A\\[1ex]\nonumber
AE^{-1}&=\{(x,z)\in Y\times Y : (x,y)\in E^{-1},\quad (y,z)\in A \}\\\label{Bella}
&=\{(x,z) \in Y\times Y : x=Ey, z=Ay, \quad y\in\dom A \} \\\nonumber
&=\ran\begin{smallbmatrix}E|_{\dom A}\\A\end{smallbmatrix}\subset Y \times Y
\end{align}
and, again, the above representations are called the \emph{kernel} and \emph{range representation} of the operator pencil $\Ac(\lambda)=\lambda E-A$, respectively, see, e.g., \cite{BasaCher02}.  The subspace $E|_{\dom A}^{-1}A$ is a subspace of $\dom A\times \dom A$. In what follows, we equip $\dom A$ with the graph norm 
$$
\|x\|_A:=\|x\|+\|Ax\|, \quad x\in \dom A.
$$

The following representation of the kernel and the range representation
uses a resolvent point of the corresponding operator pencil.
It is adopted here to linear pencils, for similar considerations see, e.g., \cite[Corollary~1.10.6]{BehrHassdeSn20}. 
\begin{lemma} 
Let $\Ac(\lambda)=\lambda E-A$ satisfy \eqref{torte} and let $\mu\in\rho(\Ac)\setminus\{\infty\}$. Then for $\lambda \in \mathbb C$
\begin{align}
\label{ran_res_rep}
    AE^{-1}-\lambda&=\ran \begin{bmatrix}
    E(A-\mu E)^{-1}\\ I_Y+(\mu-\lambda) E(A-\mu E)^{-1}
    \end{bmatrix},\\[1ex]
    E|_{\dom A}^{-1}A-\lambda&=\ker\left[
    I_{\dom A}+(\mu-\lambda)(A-\mu E)^{-1}E|_{\dom A}, \ -
    (A-\mu E)^{-1}E|_{\dom A}\right]. \label{ker_res_rep}
\end{align}
Further, $AE^{-1}-\lambda$ is closed in $Y\times Y$ and the linear relation $E|_{\dom A}^{-1}A-\lambda$ is closed in $\dom A\times\dom A$ equipped with the graph norm.
\end{lemma}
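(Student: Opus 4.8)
The plan is to obtain both identities by a single tool, namely Proposition~\ref{lem:ker_is_ran}, applied once to the relation $AE^{-1}$ living in $Y\times Y$ and once to the relation $E|_{\dom A}^{-1}A$ living in the Banach space $(\dom A,\|\cdot\|_A)$ (which is complete precisely because $A$ is closed). To invoke that proposition I must, in each case, verify that the relation is closed, that $\mu$ is a \emph{finite} resolvent point of it, and that its resolvent $(\,\cdot-\mu)^{-1}$ is exactly the bounded operator appearing in \eqref{ran_res_rep} resp.\ \eqref{ker_res_rep}; once this is in place, substituting that resolvent into the two formulas of Proposition~\ref{lem:ker_is_ran} gives the claimed kernel/range representations verbatim, and closedness of $AE^{-1}-\lambda$ and $E|_{\dom A}^{-1}A-\lambda$ then follows because translates of closed linear relations are closed.

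For the range representation I would start from the explicit description $AE^{-1}=\{(Ex,Ax):x\in\dom A\}$ in \eqref{Bella}, so that $AE^{-1}-\mu=\{(Ex,(A-\mu E)x):x\in\dom A\}$. Since $\mu\in\rho(\Ac)$, the operator $A-\mu E$ maps $\dom A$ bijectively onto $Y$ and $(A-\mu E)^{-1}:Y\to X$ is bounded; substituting $x=(A-\mu E)^{-1}z$ turns the set into $\{(E(A-\mu E)^{-1}z,z):z\in Y\}$, i.e.\ $(AE^{-1}-\mu)^{-1}=\gr\bigl(E(A-\mu E)^{-1}\bigr)$. As $E(A-\mu E)^{-1}$ is bounded and everywhere defined on $Y$, its graph is closed, whence $AE^{-1}-\mu$, and therefore $AE^{-1}$, is closed and $\mu\in\rho(AE^{-1})\setminus\{\infty\}$. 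Feeding $(AE^{-1}-\mu)^{-1}=E(A-\mu E)^{-1}$ into the first formula of Proposition~\ref{lem:ker_is_ran} yields \eqref{ran_res_rep}.

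For the kernel representation I would use \eqref{WolfineII}, i.e.\ $E|_{\dom A}^{-1}A=\{(x,z)\in\dom A\times\dom A:Ax=Ez\}$, so $E|_{\dom A}^{-1}A-\mu=\{(x,z-\mu x):x,z\in\dom A,\ Ax=Ez\}$. Writing $w=z-\mu x$ and using $Ax=Ez=Ew+\mu Ex$, i.e.\ $(A-\mu E)x=Ew$, shows $x=(A-\mu E)^{-1}E|_{\dom A}w$ and $z=w+\mu x\in\dom A$; conversely every $w\in\dom A$ produces such a pair. Hence $(E|_{\dom A}^{-1}A-\mu)^{-1}=\gr\bigl((A-\mu E)^{-1}E|_{\dom A}\bigr)$. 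The point that needs care is that $(A-\mu E)^{-1}E|_{\dom A}$ is \emph{bounded on $(\dom A,\|\cdot\|_A)$}: the factor $E|_{\dom A}:(\dom A,\|\cdot\|_A)\to Y$ is clearly bounded, and for $x=(A-\mu E)^{-1}y$ one has $Ax=y+\mu Ex$, so $\|x\|_A=\|x\|+\|Ax\|\le(1+|\mu|\|E\|)\|x\|+\|y\|\le C\|y\|$, i.e.\ $(A-\mu E)^{-1}:Y\to(\dom A,\|\cdot\|_A)$ is bounded. Thus $(E|_{\dom A}^{-1}A-\mu)^{-1}$ is again the graph of a bounded everywhere defined operator, which gives closedness of $E|_{\dom A}^{-1}A$ and $\mu\in\rho(E|_{\dom A}^{-1}A)\setminus\{\infty\}$, and substituting this resolvent into the second formula of Proposition~\ref{lem:ker_is_ran} yields \eqref{ker_res_rep}.

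The main obstacle I anticipate is exactly this last estimate on the kernel side: one has to confirm that the pencil resolvent $(A-\mu E)^{-1}$ maps $Y$ \emph{boundedly into $\dom A$ equipped with the graph norm}, not merely into $X$, since this is what is required to land in the hypotheses of Proposition~\ref{lem:ker_is_ran} over the non-standard Banach space $(\dom A,\|\cdot\|_A)$. Everything else is bookkeeping with the linear-relation operations and with the explicit forms \eqref{Bella} and \eqref{WolfineII}, together with the already noted fact that $\Lc-\lambda$ is closed whenever $\Lc$ is.
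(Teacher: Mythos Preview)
Your proposal is correct and follows essentially the same route as the paper: both arguments identify $(AE^{-1}-\mu)^{-1}=E(A-\mu E)^{-1}$ and $(E|_{\dom A}^{-1}A-\mu)^{-1}=(A-\mu E)^{-1}E|_{\dom A}$ via the substitution $x=(A-\mu E)^{-1}z$, check the graph-norm boundedness of the latter (the paper computes $\|(A-\mu E)^{-1}Ex\|_A$ directly using $A(A-\mu E)^{-1}E=E+\mu E(A-\mu E)^{-1}E$, which amounts to your factorization), and then obtain \eqref{ran_res_rep}, \eqref{ker_res_rep} and closedness from these resolvent identifications. The only cosmetic difference is that the paper derives \eqref{ran_res_rep} by the bare shift $AE^{-1}-\lambda=(AE^{-1}-\mu)+(\mu-\lambda)$ instead of quoting Proposition~\ref{lem:ker_is_ran}, which is of course the same computation.
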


\begin{proof}
The linear relation $AE^{-1}-\lambda$ is closed if and only if 
$AE^{-1}-\mu$  is closed. We have from \eqref{Bella}
\begin{align*}
 AE^{-1}-\mu&=\ran\begin{bmatrix}E|_{\dom A}\\A\end{bmatrix}-\mu
 =\{(Ex,(A-\mu E)x  : x\in\dom A \}.
 \end{align*}
 A substitution $x=(A-\mu)^{-1}z$, $z\in Y$, gives
 \begin{align}\label{RamPamPam}
 AE^{-1}-\mu &=\ran \begin{bmatrix} E(A-\mu E)^{-1}\\I_Y
 \end{bmatrix},
\end{align}
which is, by the boundedness of the linear operator 
$E(A-\mu E)^{-1}$, obviously 
a closed subspace of $Y\times Y$. Finally, \eqref{ran_res_rep} follows from 
$$
AE^{-1}-\lambda=(AE^{-1}-\mu)+(\mu-\lambda).
$$

We show \eqref{ker_res_rep}. Equation \eqref{Wolfine} gives
\begin{align}\nonumber
E|_{\dom A}^{-1}A-\mu&=\{(x,y-\mu x)\in \dom A\times \dom A: Ax=Ey\}\\\nonumber
& = \{(x,y-\mu x)\in \dom A\times \dom A: Ax-\mu Ex=E(y-\mu x)\}\\\nonumber
& = \{(x,z)\in \dom A\times \dom A: (A-\mu E)x=Ez\}\\\label{OtroTrago}
& = \{((A-\mu E)^{-1}Ez,z) : z\in \dom A\}\\\label{NickyJam}
 &=\ran \begin{bmatrix} (A-\mu E)^{-1}E\\I_{\dom A} \end{bmatrix}.
\end{align}
The linear operator $(A-\mu E)^{-1}E$ is a bounded operator
in $\dom A$ equipped with the graph norm $\|\cdot\|_A$ as
for $x\in \dom A$ it holds
\begin{align*}
\| (A-\mu E)^{-1}Ex\|_A &\leq
 \|(A-\mu E)^{-1}E\| \|x\| + \|A (A-\mu E)^{-1}E\|\|x\|\\
 &= \|(A-\mu E)^{-1}E\| \|x\| + \|E+\mu E  (A-\mu E)^{-1}E\|\|x\|\\
 &\leq M\|x\| \leq M\|x\|_A,
\end{align*}
where the constant $M$ is given by the sum of the operator
norm of the bounded operators $(A-\mu E)^{-1}E$ and $E +\mu E (A-\mu E)^{-1}E$
considered as operators with respect to the norms in $X$ and $Y$.
Hence $(A-\mu E)^{-1}E$ is a bounded operator from $\dom A$ into
$\dom A$ with respect to the graph norm.
From \eqref{NickyJam} we see that $E|_{\dom A}^{-1}A-\mu$ is a closed subspace in $\dom A\times\dom A$ equipped with the graph norm.
Moreover, it shows $\mu \in \rho(E|_{\dom A}^{-1}A)$ and we have
\begin{align*}
\left(E|_{\dom A}^{-1}A-\mu\right)^{-1}&=  \{(z,(A-\mu E)^{-1}Ez) : z\in \dom A\}
\end{align*}
and \eqref{ker_res_rep} follows from Proposition \ref{lem:ker_is_ran}.
\end{proof}

In the following proposition we relate the kernels and ranges of an operator pencil with those of its kernel and range representation.
\begin{prop}
\label{prop:ker_ran}
Let $\Ac(\lambda)=\lambda E-A$ satisfy \eqref{torte} with $\mu\in\rho(\Ac)\setminus\{\infty\}$. Then for all $\lambda\in\C$ it holds
\begin{align}
\label{kern}
\ker(E|_{\dom A}^{-1}A-\lambda)&=\ker\Ac(\lambda),&
\ker(AE^{-1}-\lambda)&=(A-\mu E)\ker \Ac(\lambda),\\
\ran(AE^{-1}-\lambda)&=\ran\Ac(\lambda), &
\ran(E|_{\dom A}^{-1}A-\lambda)&=(A-\mu E)^{-1}\ran\Ac(\lambda). \label{range}
\end{align}
Furthermore, 
\begin{align}\label{DuaLipa}
\mul(E|_{\dom A}^{-1}A)&=\ker E\cap\dom A, & \dom(E|_{\dom A}^{-1}A)&=(A-\mu E)^{-1}\ran E|_{\dom A},\\\label{Drake}
\mul(AE^{-1})&=A(\ker E\cap\dom A), & \dom(AE^{-1})&=E\dom A.
\end{align}
\end{prop}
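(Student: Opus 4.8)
The plan is to read off each of the eight subspaces directly from the explicit set-descriptions \eqref{Wolfine} of $E|_{\dom A}^{-1}A$ and \eqref{Bella} of $AE^{-1}$, after first putting the shifted relations into a convenient parametric form. Substituting $w=z-\lambda x$ in \eqref{Wolfine} and using $A-\lambda E=-\Ac(\lambda)$ gives
\[
E|_{\dom A}^{-1}A-\lambda=\{(x,w)\in\dom A\times\dom A : Ew=-\Ac(\lambda)x\},
\]
while \eqref{Bella} with $x=Ey$, $z=Ay$ yields
\[
AE^{-1}-\lambda=\{(Ey,-\Ac(\lambda)y) : y\in\dom A\}.
\]

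From the first parametrization, putting $w=0$ gives $\ker(E|_{\dom A}^{-1}A-\lambda)=\{x\in\dom A:\Ac(\lambda)x=0\}=\ker\Ac(\lambda)$ (note $\ker\Ac(\lambda)\subseteq\dom A=\dom\Ac(\lambda)$), and collecting the second components over all first components yields $\ran(E|_{\dom A}^{-1}A-\lambda)=\{w\in\dom A:Ew\in\ran\Ac(\lambda)\}$. From \eqref{Wolfine} itself one reads off $\mul(E|_{\dom A}^{-1}A)=\{z\in\dom A:Ez=0\}=\ker E\cap\dom A$ and $\dom(E|_{\dom A}^{-1}A)=\{x\in\dom A:Ax\in\ran E|_{\dom A}\}$. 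From the second parametrization, $\ran(AE^{-1}-\lambda)=-\ran\Ac(\lambda)=\ran\Ac(\lambda)$ and $\ker(AE^{-1}-\lambda)=E(\ker\Ac(\lambda))$, and from \eqref{Bella} directly $\mul(AE^{-1})=\{Ay:y\in\dom A,\ Ey=0\}=A(\ker E\cap\dom A)$ and $\dom(AE^{-1})=E\,\dom A$. This settles \eqref{Drake}, the first identity in \eqref{DuaLipa}, and the first identities in \eqref{kern} and \eqref{range}.

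It remains to convert the three remaining (``twisted'') right-hand sides, and here the resolvent point $\mu$ enters. Since $\mu\in\rho(\Ac)$, the operator $A-\mu E$ is a bijection from $\dom A$ onto $Y$, so for any $S\subseteq Y$ one has $(A-\mu E)^{-1}S=\{x\in\dom A:(A-\mu E)x\in S\}$. For $w\in\dom A$ we use the identity $(A-\mu E)w=-\Ac(\lambda)w+(\lambda-\mu)Ew$: when $\lambda\neq\mu$ this shows $Ew\in\ran\Ac(\lambda)\iff(A-\mu E)w\in\ran\Ac(\lambda)$, giving $\ran(E|_{\dom A}^{-1}A-\lambda)=(A-\mu E)^{-1}\ran\Ac(\lambda)$; for $\lambda=\mu$ both sides equal $\dom A$ because $\ran\Ac(\mu)=Y$. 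Using instead $(A-\mu E)w=Aw-\mu Ew$ together with $\mu Ew\in\ran E|_{\dom A}$ gives $Aw\in\ran E|_{\dom A}\iff(A-\mu E)w\in\ran E|_{\dom A}$, hence $\dom(E|_{\dom A}^{-1}A)=(A-\mu E)^{-1}\ran E|_{\dom A}$. Finally, on $\ker\Ac(\lambda)$ one has $Ay=\lambda Ey$, so $(A-\mu E)y=(\lambda-\mu)Ey$; thus $(A-\mu E)\ker\Ac(\lambda)=(\lambda-\mu)E(\ker\Ac(\lambda))=E(\ker\Ac(\lambda))$ when $\lambda\neq\mu$, while for $\lambda=\mu$ both sides are $\{0\}$ since $\ker\Ac(\mu)=\{0\}$; this gives $\ker(AE^{-1}-\lambda)=(A-\mu E)\ker\Ac(\lambda)$.

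I do not expect a genuine obstacle here; the proof is a sequence of elementary identifications. The only point requiring attention is the exceptional value $\lambda=\mu$ in the three twisted identities, where the scalar $\lambda-\mu$ degenerates — but in each case this is harmless since $\mu\in\rho(\Ac)$ forces $\ran\Ac(\mu)=Y$ and $\ker\Ac(\mu)=\{0\}$. Alternatively, the four kernel/range identities could be extracted from the resolvent representations \eqref{ker_res_rep} and \eqref{ran_res_rep} of the preceding lemma by computing kernels and ranges of the associated $1\times 2$ and $2\times 1$ operator matrices; the route via \eqref{Wolfine}--\eqref{Bella} is more uniform and also handles the multivalued parts and domains.
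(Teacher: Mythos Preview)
Your proof is correct. Both your argument and the paper's read off the eight subspaces from explicit set-theoretic descriptions of the shifted relations, so the overall strategy is the same. The difference is in which description is used: the paper works from the resolvent-parametrized forms established in the preceding lemma (equations \eqref{ran_res_rep} and \eqref{Faruko}), so that e.g.\ $\ker(E|_{\dom A}^{-1}A-\lambda)=\ker(I_{\dom A}+(\mu-\lambda)(A-\mu E)^{-1}E|_{\dom A})$ and then algebraically massages this back to $\ker\Ac(\lambda)$; you instead go back to the primitive descriptions \eqref{Wolfine} and \eqref{Bella}, derive the clean parametrizations $E|_{\dom A}^{-1}A-\lambda=\{(x,w):Ew=-\Ac(\lambda)x\}$ and $AE^{-1}-\lambda=\{(Ey,-\Ac(\lambda)y)\}$, and read off the answers without passing through the resolvent. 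Your route is slightly more elementary and self-contained (it does not need the preceding lemma), at the small cost of having to treat $\lambda=\mu$ separately in the three ``twisted'' identities, whereas the paper's resolvent approach handles all $\lambda$ uniformly. You already anticipate this alternative in your closing remark, so the two arguments are really two sides of the same computation.
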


\begin{proof}
Obviously, one has
$$
E|_{\dom A}^{-1}A-\lambda=(E|_{\dom A}^{-1}A-\mu)+(\mu-\lambda)
$$
and  \eqref{OtroTrago} implies
\begin{align}\label{Faruko}
E|_{\dom A}^{-1}A-\lambda
& = \{((A-\mu E)^{-1}Ez,z+(\mu-\lambda)((A-\mu E)^{-1}Ez) : z\in \dom A\}.
\end{align}
Thus,
\begin{align*}
\ker(E|_{\dom A}^{-1}A-\lambda)&=\ker(I_{\dom A}+(\mu-\lambda)(A-\mu E)^{-1}E|_{\dom A})\\&=\ker((A-\mu E)+(\mu-\lambda)E|_{\dom A})\\&=\ker\Ac(\lambda).
\end{align*}
Furthermore, \eqref{ran_res_rep} implies
\begin{align*}
\ran(AE^{-1}-\lambda)&=\ran(I_Y+(\mu-\lambda)E(A-\mu E)^{-1})\\&=\ran((A-\mu E)(A-\mu E)^{-1}+(\mu-\lambda)E(A-\mu E)^{-1})\\&=\ran((A-\mu E)+(\mu-\lambda)E)\\&=\ran\Ac(\lambda).
\end{align*}
We show the remaining two equations in \eqref{kern} and \eqref{range}.
Relations \eqref{Faruko} and \eqref{ran_res_rep} yield
\begin{align*}
\ran(E|_{\dom A}^{-1}A-\lambda)&=\ran(I_{\dom A}+(\mu-\lambda)(A-\mu E)^{-1}E|_{\dom A})\\&=(A-\mu E)^{-1}\ran\Ac(\lambda),\\
\ker(AE^{-1}-\lambda)&=\ker(I_Y+(\mu-\lambda)E(A-\mu E)^{-1})\\&=(A-\mu E)\ker\Ac(\lambda).
\end{align*}

The remaining statements on the domains and the multi-valued parts in \eqref{DuaLipa} and \eqref{Drake} 
follow from \eqref{Wolfine}, \eqref{Bella}, \eqref{RamPamPam}, and 
\eqref{NickyJam}.
\end{proof}

\begin{cor}
\label{cor:index}
Let $\Ac(\lambda)=\lambda E-A$ satisfy \eqref{torte}. Then for all $\lambda\in\C$ 
\begin{align*}
\dim \ker(\Ac(\lambda))=\dim \ker(AE^{-1}-\lambda)=\dim\ker (E|_{\dom A}^{-1}A-\lambda).
\end{align*}
Further, $\ran \Ac(\lambda)$ is closed if and only if $\ran (AE^{-1}-\lambda)$ is closed and this is true if and only if $\ran (E|_{\dom A}^{-1}A-\lambda)$ is closed. Furthermore,
\[
\codim \ran(\Ac(\lambda))=\codim \ran  (AE^{-1}-\lambda)=\codim \ran (E|_{\dom A}^{-1}A-\lambda).
\]
\end{cor}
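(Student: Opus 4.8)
The plan is to reduce all three assertions to Proposition~\ref{prop:ker_ran}, the only additional ingredient being that, for a fixed $\mu\in\rho(\Ac)\setminus\{\infty\}$ (which exists by \eqref{torte}), the operator $A-\mu E$ is a topological isomorphism between the Banach spaces $(\dom A,\|\cdot\|_A)$ and $Y$.

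First I would record this isomorphism statement. Since $A-\mu E\colon\dom A\to Y$ is bijective by \eqref{torte} and closed (because $A$ is closed and $E$ is bounded), its inverse $(A-\mu E)^{-1}$ is a closed, everywhere-defined operator $Y\to X$, hence bounded by the Closed Graph Theorem. On the one hand, $\|(A-\mu E)x\|\le\|Ax\|+|\mu|\,\|E\|\,\|x\|\le\max\{1,|\mu|\,\|E\|\}\,\|x\|_A$ for $x\in\dom A$, so $A-\mu E$ is bounded as a map $(\dom A,\|\cdot\|_A)\to Y$. On the other hand, using $A(A-\mu E)^{-1}=I_Y+\mu E(A-\mu E)^{-1}$, the same computation already carried out in the proof of the preceding lemma shows that $(A-\mu E)^{-1}$ is bounded as a map $Y\to(\dom A,\|\cdot\|_A)$. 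Hence $A-\mu E$ is a homeomorphism between these Banach spaces, and so is its restriction to any subspace of $\dom A$, viewed as a map onto its image.

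Next I would treat the kernels. Proposition~\ref{prop:ker_ran} gives $\ker(E|_{\dom A}^{-1}A-\lambda)=\ker\Ac(\lambda)$ as subspaces of $\dom A$, and $\ker(AE^{-1}-\lambda)=(A-\mu E)\ker\Ac(\lambda)$; since $\ker\Ac(\lambda)\subseteq\dom A$ and $A-\mu E$ is a linear bijection onto its image, all three kernels have the same dimension. For the ranges, the same proposition gives $\ran(AE^{-1}-\lambda)=\ran\Ac(\lambda)$ as subspaces of $Y$, so the claims about closedness and codimension for the range representation are immediate, and it gives $\ran(E|_{\dom A}^{-1}A-\lambda)=(A-\mu E)^{-1}\ran\Ac(\lambda)$. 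Because $(A-\mu E)^{-1}\colon Y\to(\dom A,\|\cdot\|_A)$ is a homeomorphism, it maps closed subspaces onto closed subspaces and induces a linear isomorphism $Y/\ran\Ac(\lambda)\to(\dom A)/(A-\mu E)^{-1}\ran\Ac(\lambda)$; hence $\ran(E|_{\dom A}^{-1}A-\lambda)$ is closed precisely when $\ran\Ac(\lambda)$ is, and the two codimensions coincide.

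Since Proposition~\ref{prop:ker_ran} carries the substantive content, I do not expect a genuine obstacle here. The only point deserving care is the verification that $A-\mu E$ is a homeomorphism, not merely an algebraic bijection, once $\dom A$ is equipped with the graph norm — this is what lets one transport closedness through the equality $\ran(E|_{\dom A}^{-1}A-\lambda)=(A-\mu E)^{-1}\ran\Ac(\lambda)$ — but the needed estimates are exactly those already established in the proof of the preceding lemma.
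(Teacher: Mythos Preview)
Your proof is correct and matches the paper's intent: the paper gives no proof of this corollary, treating it as immediate from Proposition~\ref{prop:ker_ran}. You have correctly identified and handled the one nontrivial point the paper leaves implicit, namely that $A-\mu E$ is a topological isomorphism $(\dom A,\|\cdot\|_A)\to Y$, which is precisely what is needed to transfer closedness and codimension through the identity $\ran(E|_{\dom A}^{-1}A-\lambda)=(A-\mu E)^{-1}\ran\Ac(\lambda)$.
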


Below, we describe the relation between the spectrum of operator pencils and the associated linear relations. For related results see \cite{GernMoal20, Naki16}.
\begin{prop}
\label{prop:spectrum}
Let $\Ac(\lambda)=\lambda E-A$ satisfy \eqref{torte} then
\begin{align}
\label{spek_eq}
&\sigma(\Ac)=\sigma(AE^{-1})=\sigma(E|_{\dom A}^{-1}A),\\ 
&\sigma_p(\Ac)=\sigma_p(AE^{-1})=\sigma_p(E|_{\dom A}^{-1}A),\label{point_spek}\\
&\sigma_e(\Ac)=\sigma_e(AE^{-1})=\sigma_e(E|_{\dom A}^{-1}A). \label{ess_spek}
\end{align}
\end{prop}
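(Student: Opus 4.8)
The plan is to derive the three claimed equalities from the more elementary results already established, namely Proposition~\ref{prop:ker_ran} and Corollary~\ref{cor:index}, together with the resolvent representations of the kernel and range representations. The key observation is that all three notions of spectrum are governed by the behaviour of $\ker(\cdot-\lambda)$ and $\ran(\cdot-\lambda)$ (and, for $\infty$, by the multi-valued part and the domain), and these have already been put in correspondence.

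First I would handle the finite points $\lambda\in\C$. For the spectrum \eqref{spek_eq}: $\lambda\in\rho(\Ac)$ means $\Ac(\lambda)$ has a bounded everywhere-defined inverse, which by the Closed Graph Theorem (since $A$ is closed, so $\Ac(\lambda)=\lambda E-A$ is closed) is equivalent to $\ker\Ac(\lambda)=\{0\}$ and $\ran\Ac(\lambda)=Y$. By Corollary~\ref{cor:index} the kernel dimensions all agree, so $\ker\Ac(\lambda)=\{0\}$ iff $\ker(AE^{-1}-\lambda)=\{0\}$ iff $\ker(E|_{\dom A}^{-1}A-\lambda)=\{0\}$. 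For the ranges, Proposition~\ref{prop:ker_ran} gives $\ran(AE^{-1}-\lambda)=\ran\Ac(\lambda)$ directly, and $\ran(E|_{\dom A}^{-1}A-\lambda)=(A-\mu E)^{-1}\ran\Ac(\lambda)$; since $A-\mu E$ maps $\dom A$ bijectively onto $Y$ by \eqref{torte}, the latter equals $\dom A$ iff $\ran\Ac(\lambda)=Y$. Hence $\lambda\in\rho$ for all three simultaneously, proving \eqref{spek_eq} on $\C$; the point spectrum statement \eqref{point_spek} on $\C$ is then immediate from the equality of kernel dimensions. For the essential spectrum \eqref{ess_spek} on $\C$, Corollary~\ref{cor:index} already records that the kernel dimensions coincide, the ranges are closed simultaneously, and the codimensions of the ranges coincide; so $\Ac(\lambda)$ is Fredholm iff $AE^{-1}-\lambda$ is iff $E|_{\dom A}^{-1}A-\lambda$ is.

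Next I would treat the point $\lambda=\infty$, which is the part requiring a little more care since it is defined through different objects. By definition $\infty\in\sigma_p(\Ac)$ iff $\ker E\cap\dom A\neq\{0\}$; by \eqref{DuaLipa} this is exactly $\mul(E|_{\dom A}^{-1}A)$, and $\infty\in\sigma_p(E|_{\dom A}^{-1}A)$ iff this multi-valued part is nontrivial. For the range representation, \eqref{Drake} gives $\mul(AE^{-1})=A(\ker E\cap\dom A)$; since $A$ restricted to $\ker E\cap\dom A$ is injective (if $x\in\ker E\cap\dom A$ and $Ax=0$ then $(\mu E-A)x=0$, forcing $x=0$), we get $\mul(AE^{-1})\neq\{0\}$ iff $\ker E\cap\dom A\neq\{0\}$, giving \eqref{point_spek} at $\infty$. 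For \eqref{spek_eq} at $\infty$: $\infty\in\rho(\Ac)$ means $E$ is boundedly invertible and $A$ is bounded; $\infty\in\rho(AE^{-1})$ means $\mul(AE^{-1})=\{0\}$ and $\dom(AE^{-1})=Y$, i.e.\ (by \eqref{Drake}) $\ker E\cap\dom A=\{0\}$ and $E\dom A=Y$; analogously for $E|_{\dom A}^{-1}A$ via \eqref{DuaLipa}. I would show these are all equivalent: if $E$ is boundedly invertible with $A$ bounded then $\dom A=X$, $\ker E=\{0\}$ and $E\dom A=EX=Y$, and $(A-\mu E)^{-1}\ran E|_{\dom A}=(A-\mu E)^{-1}Y=\dom A$, so both linear relations have $\infty$ in their resolvent set; conversely, if $\ker E\cap\dom A=\{0\}$ and $E\dom A=Y$ then, using that $\mu E-A$ is a bijection $\dom A\to Y$, one argues that $E$ is injective with range $Y$, hence boundedly invertible, and that $A=\mu E-(\mu E-A)$ is bounded on $X=\dom A$. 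The essential-spectrum statement at $\infty$ is handled the same way: $E|_{\dom A}$ Fredholm means $\ker E\cap\dom A$ finite-dimensional and $Y/\overline{E\dom A}$ finite-dimensional with $E\dom A$ closed, and by \eqref{DuaLipa}, \eqref{Drake} together with the injectivity of $A$ on $\ker E\cap\dom A$ and the boundedness of $A-\mu E$ as a map $\dom A\to Y$ (so it preserves closedness, finite dimension and codimension), these conditions transfer verbatim to $\dom(AE^{-1})=E\dom A$ and $\dom(E|_{\dom A}^{-1}A)=(A-\mu E)^{-1}\ran E|_{\dom A}$ and the respective multi-valued parts.

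The main obstacle is the bookkeeping at $\infty$, where the relevant quantities ($\mul$, $\dom$, closedness and codimension of the domain) are not the ones covered by Corollary~\ref{cor:index} and must be extracted from \eqref{DuaLipa}, \eqref{Drake} by hand, using repeatedly that $A-\mu E:\dom A\to Y$ is a bounded bijection (continuous since $A$ is closed and $\mu\in\rho(\Ac)$, with bounded inverse by the Closed Graph Theorem) — this is what lets one move closedness and (co)dimension across the maps $(A-\mu E)^{\pm1}$ — and that $A|_{\ker E\cap\dom A}$ is injective. Everything on $\C$ is essentially a direct translation of the already-proven Proposition~\ref{prop:ker_ran} and Corollary~\ref{cor:index}, so I would keep that part brief and spend the words on the $\infty$ case.
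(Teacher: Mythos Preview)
Your proposal is correct and follows essentially the same route as the paper: the finite points are dispatched by a direct appeal to Proposition~\ref{prop:ker_ran} and Corollary~\ref{cor:index}, and the point $\infty$ is handled separately via \eqref{DuaLipa} and \eqref{Drake}, using that $A-\mu E:\dom A\to Y$ is a bounded bijection and that $A$ is injective on $\ker E\cap\dom A$. The only cosmetic difference is that the paper organises the $\infty$ case as a cyclic chain of implications (\,$\rho(\Ac)\Rightarrow\rho(AE^{-1})\Rightarrow\rho(E|_{\dom A}^{-1}A)\Rightarrow\rho(\Ac)$, and similarly for the other two spectra), whereas you reduce all three conditions to the common pair ``$\ker E\cap\dom A=\{0\}$ and $E\dom A=Y$'' and argue from there; your version is arguably cleaner on the step $\dom A=X$, which the paper glosses over slightly.
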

\begin{proof}
The assertions \eqref{spek_eq}, \eqref{point_spek} and \eqref{ess_spek} for the finite spectrum are immediate from Proposition~\ref{prop:ker_ran} and Corollary~\ref{cor:index}.

We prove \eqref{spek_eq} for the spectral point $\infty$. If $\infty\in\rho(\Ac)$ then $E$ is invertible and $A$ is bounded. Hence, $AE^{-1}=\gr (AE^{-1})$ which implies $\mul(AE^{-1})=\{0\}$ and $\dom(AE^{-1})=Y$. Thus,  $\infty\in\rho(AE^{-1})$. 
If $\infty\in\rho(AE^{-1})$ then $\dom (AE^{-1})=Y$ implies that $E$ is surjective. It is also injective. Indeed, if there exists $x\in X$ with $Ex=0$ and $x\neq 0$ then $\mul(AE^{-1})=\{0\}$ gives $Ax=0$. Hence, for all $\lambda \in \mathbb C$ it holds
\begin{equation}\label{Londra}
(\lambda E -A)x=0.
\end{equation}
Together with $\rho(\Ac)\neq \emptyset$ we conclude $x=0$. Hence, $E$ is also injective and, thus, $E^{-1}$ is a bounded operator. Since $AE^{-1}=\gr (AE^{-1})$ is the graph of a bounded operator also $A=AE^{-1}E$ is bounded and
then, by \eqref{DuaLipa}, $\infty\in\rho(E|_{\dom A}^{-1}A)$. 

If $\infty\in\rho(E|_{\dom A}^{-1}A)$ then by \eqref{DuaLipa} the operator $E|_{\dom A}$ is injective and \linebreak $(A-\mu E)^{-1}\ran E|_{\dom A}=\dom(E|_{\dom A}^{-1}A)=X$ implies that $\dom A=X$, which is then already bounded by the Closed Graph Theorem. Furthermore, $(A-\mu E)^{-1}$ is then an isomorphism between $Y$ and $X$ and  \eqref{DuaLipa} shows the surjectivity of $E$.
Hence, $E$ is invertible which means $\infty\in\rho(\Ac)$. 

We show \eqref{point_spek}.
If $\infty\in\sigma_p(\Ac)$ then there exists $x\in\ker E\cap\dom A$ with $x\neq 0$. As $Ax= 0$ implies \eqref{Londra} for all
$\lambda \in \mathbb C$, we have $Ax\neq0$
and, by \eqref{Drake}, $\infty\in\sigma_p(AE^{-1})$.

If $\infty\in\sigma_p(AE^{-1})$ then, by \eqref{Drake},
$\ker E\cap\dom A\neq \{0\}$ and \eqref{DuaLipa} implies
$\infty\in\sigma_p(E|_{\dom A}^{-1}A)$. Finally,
since $\mul(E|_{\dom A}^{-1}A)=\ker E\cap\dom A$, 
$\infty\in\sigma_p(E|_{\dom A}^{-1}A)$is equivalent to
$\infty\in\sigma_p(\Ac)$ . 

It remains to show \eqref{ess_spek}.
 If $\infty\notin\sigma_e(\Ac)$ then $E|_{\dom A}$ is Fredholm. Hence,
 by \eqref{Drake}, $\dom(AE^{-1})=E(\dom A)=\ran E|_{\dom A}$ is closed, finite co-dimensional and  $\dim\mul AE^{-1}\leq \dim(\ker E\cap\dom A)<\infty$. Therefore, $\infty\in\sigma_e(AE^{-1})$. 
 
If $\infty\notin\sigma_e(AE^{-1})$ then $\ran E|_{\dom A}=\dom AE^{-1}$ is closed with finite co-dimension and the dimension of $ \mul(AE^{-1})=A(\ker E\cap\dom A)$, cf.\ \eqref{Drake}, is finite.
As $A$ is injective on $\ker E\cap\dom A$ (see \eqref{Londra} above),
also $\ker E\cap\dom A$ is of finite dimension and 
$E|_{\dom A}$ is Fredholm. Hence, also $(A-\mu E)^{-1}E|_{\dom A}$ is Fredholm and in particular, by \eqref{DuaLipa},
$\dom(E|_{\dom A}^{-1}A)=\ran (A-\mu E)^{-1}E|_{\dom A}$ is closed and finite co-dimensional and $\mul(E|_{\dom A}^{-1}A)=\ker E|_{\dom A}$ is finite dimensional. Hence, $\infty\notin\sigma_e(E|_{\dom A}^{-1}A)$. 

Finally, if $\infty\notin\sigma_e(E|_{\dom A}^{-1}A)$ then, by
\eqref{DuaLipa}, $(A-\mu E)^{-1}E|_{\dom A}$ is Fredholm and therefore also $E|_{\dom A}$ implying that $\infty\notin\sigma_e(\Ac)$.
\end{proof}

\section{Weyr characteristic of operator pencils and linear relations}

In this section we consider the point spectrum of operator pencils and
its associated representations. 
In the study of the point spectrum of linear operators or 
matrices the Weyr characteristic is a measure for the size
of the kernels  of  $(Z-\lambda)^n$, where $Z$ is
the operator/matrix under investigation and $\lambda$ is the eigenvalue (see, e.g., \cite{Shap99,Shap15}).
In the following we introduce this characteristic also for linear relations and operator pencils, see, e.g., \cite{DijSnoo87}.
\begin{defi}
Let $\Lc$ be a linear relation in a vector space $X$. 
The root subspaces of order $k\geq 0$ at $\lambda\in\C$ are given by 
\[
\Rf_\lambda^k(\Lc):=\ker(\Lc-\lambda)^k,\quad \Rf_\lambda(\Lc):=\bigcup_{j=1}^{\infty}\Rf_\lambda^j(\Lc)
\]
and the root subspaces at $\infty$ are given by 
\[
\Rf_\infty^k(\Lc)=\ker\Lc^{-k},\quad \Rf_\infty(\Lc):=\bigcup_{j=1}^{\infty}\Rf_\infty^j(\Lc).
\]
Furthermore, the Weyr characteristic of $\Lc$ is given by 
\begin{align*}
w_k(\Lc;\lambda)&:=\dim\frac{\Rf_\lambda^k(\Lc)}{\Rf_\lambda^{k-1}(\Lc)},\quad k\geq 1.
\end{align*}
\end{defi}
Obviously, the Weyr indices are well defined since  $\Rf_\lambda^{k-1}(\Lc)\subseteq\Rf_\lambda^k(\Lc)$ for all $k\geq 1$. 
Moreover,     it holds (see Section~\ref{sec:prelim})
$$
\Rf_\infty^k(\Lc)=\mul\Lc^k \quad\mbox{and}\quad
 \Rf_\lambda^k(\Lc)=\Rf_0^k(\Lc-\lambda).
$$

We introduce the Weyr characteristic for operator pencils.
It is based on the root subspaces which are built up by Jordan chains, see \cite{Keld51} or \cite{Mark88}. We say that $(x_1,\ldots,x_k)\in(\dom A)^k$, $k\geq 1$, is a \emph{Jordan chain of length $k$ at $\lambda\in\C$} if 
\begin{align}
\label{JC_def}
(A-\lambda E)x_1=0,\quad (A-\lambda E)x_2=Ex_1,\quad \ldots,\quad (A-\lambda E)x_k=Ex_{k-1}
\end{align}
and $(x_1,\ldots,x_k)$ is a \emph{Jordan chain of length $k$ at $\infty$} if 
\begin{align}
\label{JC_infty}
Ex_1=0,\quad Ex_2=Ax_1,\quad \ldots,\quad Ex_k=Ax_{k-1}.
\end{align}
With the above notion we introduce the \emph{root subspaces} of the operator pencil $\Ac(\lambda)=\lambda E-A$ as follows
\begin{align*}
\Rf_\lambda^k(E,A):=\{x\in \dom A ~|~ \text{$x=x_k$ fulfills \eqref{JC_def}}\},\quad \Rf_\lambda(E,A):=\bigcup_{j=1}^\infty\Rf^j_\lambda(E,A),\\
\Rf_\infty^k(E,A):=\{x\in \dom A ~|~ \text{$x=x_k$ fulfills \eqref{JC_infty}}\},\quad  \Rf_\infty(E,A):=\bigcup_{j=1}^\infty\Rf^j_\infty(E,A).
\end{align*}
From \eqref{JC_def} we obtain
\begin{align}
\label{Cosculluela}
\Rf_\lambda^k(E,A)=\Rf_0^k(E,A-\lambda E).
\end{align}
Rewriting \eqref{JC_def} gives
\begin{align*}
Ax_1=\lambda Ex_1,\quad Ax_2=E(x_1+\lambda x_2),\quad \ldots,\quad Ax_k=E(x_{k-1}+\lambda x_k),
\end{align*}
which is equivalent to
\begin{align*}
(x_1, \lambda x_1),\quad (x_2,x_1+\lambda x_2),\quad \ldots,\quad (x_k, x_{k-1}+\lambda x_k) \in E|_{\dom A}^{-1}A
\end{align*}
(see \eqref{WolfineII}). This and \eqref{Cosculluela} shows
\begin{align*}
\Rf_\lambda^k(E,A)=\ker (E|_{\dom A}^{-1}A-\lambda)^k=
\Rf_0^k(E|_{\dom A}^{-1}A-\lambda)
= \Rf_\lambda^k(E|_{\dom A}^{-1}A) ,\quad k\geq 1.
\end{align*}
Similarly one shows with \eqref{JC_infty}
\begin{align*}
\Rf_\infty^k(E,A)
= \Rf_\infty^k(E|_{\dom A}^{-1}A),\quad k\geq 1.
\end{align*}
The \emph{Weyr characteristic} of the regular operator pencil  $\Ac(\lambda)=\lambda E-A$ is defined as 
\[
w_k(\Ac;\lambda):=w_k(E|_{\dom A}^{-1}A;\lambda),\quad k\geq 1.
\]
For equivalent definitions for pencils in finite dimensional spaces, see \cite{Bole98,KarcKalo86} and for matrices, see \cite{AitkTurn04,MacD33,Shap99}.

\begin{lemma}
\label{lem:root_ker_ran}
Let $\Ac(\lambda)=\lambda E-A$ satisfy \eqref{torte}. Then for all $\lambda\in\C$
\[
\Rf_{\lambda}^k(AE^{-1})=E\Rf_{\lambda}^k(E|_{\dom A}^{-1}A),\quad k\geq 1
\]
and
\[
\Rf_{\infty}^k(AE^{-1})=A\Rf_{\infty}^k(E|_{\dom A}^{-1}A),\quad k\geq 1.
\]
\end{lemma}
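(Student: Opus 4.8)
The plan is to prove both identities by a purely formal unravelling of the iterated kernels that define the root subspaces, recognising the resulting chains as the Jordan chains \eqref{JC_def} and \eqref{JC_infty} of the pencil; no resolvent point or other analytic input is needed.

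For the finite eigenvalue, recall from the computation around \eqref{Bella} that $AE^{-1}-\lambda=(A-\lambda E)\,E|_{\dom A}^{-1}$. By the definition of the product of linear relations, a vector $y\in Y$ belongs to $\Rf_\lambda^k(AE^{-1})=\ker(AE^{-1}-\lambda)^k$ if and only if there are $z_0,\dots,z_k\in Y$ with $z_0=y$, $z_k=0$, and $c_1,\dots,c_k\in\dom A$ with
\[
z_{j-1}=Ec_j,\qquad z_j=(A-\lambda E)c_j,\qquad j=1,\dots,k.
\]
Eliminating the $z_j$ and reading these relations from $j=k$ downwards gives
\[
(A-\lambda E)c_k=0,\qquad (A-\lambda E)c_j=Ec_{j+1}\ \ (j=k-1,\dots,1),\qquad y=Ec_1 ,
\]
that is, $(c_k,c_{k-1},\dots,c_1)$ is a Jordan chain of length $k$ at $\lambda$ in the sense of \eqref{JC_def} with $y=Ec_1$. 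Hence $\ker(AE^{-1}-\lambda)^k=E\,\Rf_\lambda^k(E,A)$, and since $\Rf_\lambda^k(E,A)=\Rf_\lambda^k(E|_{\dom A}^{-1}A)$ was established before the statement, the first identity follows. The reverse inclusion is the same computation run backwards: starting from a Jordan chain $(x_1,\dots,x_k)$ at $\lambda$ one puts $c_j:=x_{k+1-j}$, $z_j:=Ec_{j+1}$ for $j<k$, $z_k:=0$, and checks $(z_{j-1},z_j)\in AE^{-1}-\lambda$.

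For $\lambda=\infty$ the argument is the same after passing to inverses. By definition $\Rf_\infty^k(AE^{-1})=\mul(AE^{-1})^k$, which is the kernel of the $k$-th power of $(AE^{-1})^{-1}$, and $(AE^{-1})^{-1}=E|_{\dom A}\,A^{-1}$ by the reversal rule for the inverse of a product. Unravelling $\ker\big((E|_{\dom A}A^{-1})^{k}\big)$ in exactly the same way produces $c_1,\dots,c_k\in\dom A$ with $Ec_k=0$, $Ec_j=Ac_{j+1}$ for $j=k-1,\dots,1$, and $u=Ac_1$; reversing the order, $(c_k,\dots,c_1)$ is a Jordan chain at $\infty$ as in \eqref{JC_infty} with $u=Ac_1$, so $\Rf_\infty^k(AE^{-1})=A\,\Rf_\infty^k(E,A)=A\,\Rf_\infty^k(E|_{\dom A}^{-1}A)$. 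The only delicate point anywhere is the index bookkeeping — the chain witnessing membership in a $k$-fold product is numbered in the opposite order to a Jordan chain — which I would settle once and for all by fixing the relabelling $c_j=x_{k+1-j}$ and using it in both parts.
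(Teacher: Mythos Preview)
Your argument is correct and is essentially the same as the paper's: both unpack membership in $\ker(AE^{-1}-\lambda)^k$ (resp.\ $\mul(AE^{-1})^k$) as a chain of elements of $\dom A$ satisfying exactly the Jordan-chain relations \eqref{JC_def} (resp.\ \eqref{JC_infty}), with the final element mapped by $E$ (resp.\ $A$) to the given $y$. The only differences are cosmetic---the paper reduces to $\lambda=0$ and indexes so that no relabelling is needed, whereas you keep a general $\lambda$, record the factorisations $AE^{-1}-\lambda=(A-\lambda E)E|_{\dom A}^{-1}$ and $(AE^{-1})^{-1}=E|_{\dom A}A^{-1}$, and make the reversal $c_j=x_{k+1-j}$ explicit.
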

\begin{proof}
Assume without restriction that $\lambda=0$. If $y\in\Rf_0^k(AE^{-1})$ then there exists  $y_1,\ldots,y_{k}\in Y$ and $x_1,\ldots,x_k\in \dom A$ satisfying
\[
(Ex_1,Ax_1)=(y_1,0),\quad \ldots,\quad (Ex_k,Ax_k)=(y_{k},y_{k-1}),\quad  y=y_k.
\]
Then
\[
Ax_1=0,\quad Ax_2=Ex_1,\quad \quad \ldots,\quad  Ax_k=Ex_{k-1},\quad  y=Ex_k
\]
which is, by \eqref{WolfineII}, $x_k\in\Rf_0^k(E|_{\dom A}^{-1}A)$ and hence $y\in E\Rf_0^k(E|_{\dom A}^{-1}A)$. The converse implication is clear. If  $y\in\Rf_{\infty}(AE^{-1})$ then there exists  $y_1,\ldots,y_{k}\in Y$ and $x_1,\ldots,x_k\in \dom A$ satisfying
\[
(Ex_1,Ax_1)=(0,y_1),\quad \ldots,\quad (Ex_k,Ax_k)=(y_{k-1},y_{k}),\quad  y=y_k.
\]
Hence, 
\[
Ex_1=0,\quad Ax_1=y_1=Ex_2,\quad \ldots,\quad Ax_{k-1}=Ex_k\quad y=Ax_k
\]
which means that $x_k\in\Rf_{\infty}^k(E|_{\dom A}^{-1}A)$ and hence $y=Ax_k\in\Rf_{\infty}^k(E|_{\dom A}^{-1}A)$. The converse is clear.
\end{proof}

\begin{lemma}
Let $\Lc$ be a linear relation in a vector space $X$. For all $k\geq 1$ 
there exists a linear injection
\[
\iota_k: \frac{\Rf_{\lambda}^{k+1}(\Lc)}{\Rf_{\lambda}^{k}(\Lc)}\rightarrow \frac{\Rf_{\lambda}^{k}(\Lc)}{\Rf_{\lambda}^{k-1}(\Lc)}.
\]
In particular, $w_{k+1}(\Lc;\lambda)\leq w_{k}(\Lc;\lambda)$ for all $k\geq 1$. 
\end{lemma}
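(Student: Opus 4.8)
The plan is to construct the injection $\iota_k$ explicitly by unfolding what membership in the root subspaces means in terms of the linear relation $\Lc$, and then to apply $\Lc-\lambda$ (viewed as a map on representatives) to shift a chain down by one step. Without loss of generality I would set $\lambda=0$, since $\Rf_\lambda^k(\Lc)=\Rf_0^k(\Lc-\lambda)$ and $\Lc-\lambda$ is again a linear relation; this keeps the notation clean. So it suffices to produce an injection $\Rf_0^{k+1}(\Lc)/\Rf_0^k(\Lc)\hookrightarrow\Rf_0^k(\Lc)/\Rf_0^{k-1}(\Lc)$.

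The key observation is that if $x\in\Rf_0^{k+1}(\Lc)=\ker\Lc^{k+1}$, then by definition of the product of relations there is a chain $x=x_0,x_1,\ldots,x_{k+1}=0$ with $(x_{j},x_{j+1})\in\Lc$ for $j=0,\ldots,k$; equivalently $(x,0)\in\Lc^{k+1}$ means $(x,y)\in\Lc$ for some $y$ with $(y,0)\in\Lc^k$, i.e. $y\in\ker\Lc^k=\Rf_0^k(\Lc)$. So the assignment $x\mapsto y$, where $(x,y)\in\Lc$ and $y\in\Rf_0^k(\Lc)$, carries $\Rf_0^{k+1}(\Lc)$ into $\Rf_0^k(\Lc)$. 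This $y$ is not uniquely determined by $x$ (the ambiguity lies in $\mul\Lc\cap\Rf_0^k(\Lc)$, and more relevantly $\mul\Lc\subseteq\Rf_0^1(\Lc)\subseteq\Rf_0^{k-1}(\Lc)$ once $k\geq 1$... one should check $k=1$ separately since then $\Rf_0^{k-1}(\Lc)=\Rf_0^0(\Lc)=\{0\}$), so I would define $\iota_k$ on cosets: send $x+\Rf_0^k(\Lc)$ to $y+\Rf_0^{k-1}(\Lc)$. The steps are then: (i) well-definedness of the value modulo $\Rf_0^{k-1}(\Lc)$ — if $x\in\Rf_0^k(\Lc)$ then one may take $y\in\Rf_0^{k-1}(\Lc)$, and if $(x,y),(x,y')\in\Lc$ then $y-y'\in\mul\Lc\subseteq\Rf_0^1(\Lc)$, which lies in $\Rf_0^{k-1}(\Lc)$ for $k\geq 2$ (for $k=1$ the source is $\Rf_0^2/\Rf_0^1$, and one checks directly that the choice of $y$ inside $\ker\Lc$ does not matter because we land in $\Rf_0^1/\Rf_0^0=\ker\Lc$ and... here one must be a little careful and may instead note that for $k=1$ the map is $x+\ker\Lc\mapsto y$ with $y$ any element of $\ker\Lc$ with $(x,y)\in\Lc$, and injectivity still goes through); (ii) linearity, which is immediate from linearity of $\Lc$ as a subspace; (iii) injectivity: if $\iota_k(x+\Rf_0^k(\Lc))=0$, i.e. the chosen $y\in\Rf_0^{k-1}(\Lc)$, then $(x,y)\in\Lc$ with $y\in\ker\Lc^{k-1}$ forces $(x,0)\in\Lc^k$, so $x\in\Rf_0^k(\Lc)$, i.e. the coset was already zero.

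Once $\iota_k$ is exhibited as an injective linear map, the inequality $w_{k+1}(\Lc;\lambda)\leq w_k(\Lc;\lambda)$ follows because $\dim$ of the domain is at most $\dim$ of the codomain (with the usual convention that this holds in the extended sense when the dimensions are infinite).

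The main obstacle I anticipate is purely bookkeeping rather than conceptual: making the definition of $\iota_k$ genuinely independent of the chain/representative chosen, especially handling the boundary case $k=1$ where $\Rf_0^{k-1}(\Lc)=\{0\}$ and the multivalued part $\mul\Lc$ cannot be absorbed into it. One clean way around this is to define $\iota_k$ not on individual elements but via the relation-theoretic identity $\Rf_0^{k+1}(\Lc)=\{x:(x,y)\in\Lc\text{ for some }y\in\Rf_0^k(\Lc)\}$, set up the correspondence as a linear relation between the two quotient spaces, and verify it is single-valued and injective by the argument in (i) and (iii) above; the single-valuedness is exactly the point where the inclusion $\mul\Lc\subseteq\Rf_0^{k-1}(\Lc)$ (for $k\geq 2$) or a direct check ($k=1$) is used. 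No deep input is needed — only the definitions of product, kernel, and multivalued part of a linear relation from Section~\ref{sec:prelim}.
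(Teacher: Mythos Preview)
Your plan coincides with the paper's: reduce to $\lambda=0$ and send the class of $x\in\Rf_0^{k+1}(\Lc)$ to the class of some $y\in\Rf_0^k(\Lc)$ with $(x,y)\in\Lc$; the paper simply writes $\iota_k([x_{k+1}]):=[x_k]$ and checks injectivity without ever verifying that $[x_k]$ is independent of the chosen chain. You do try to verify this, which is the right instinct, but the inclusion you invoke, $\mul\Lc\subseteq\Rf_0^1(\Lc)=\ker\Lc$, is false for general linear relations (e.g.\ $X=\C^2$, $\Lc=\Span\{(0,e_1),(e_2,0)\}$ has $\mul\Lc=\Span\{e_1\}$ and $\ker\Lc=\Span\{e_2\}$). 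The actual obstruction to single-valuedness of the quotient assignment is whether $\mul\Lc\cap\Rf_0^k(\Lc)\subseteq\Rf_0^{k-1}(\Lc)$, and this can also fail: on $X=\C^3$ with $N$ the $3\times 3$ nilpotent Jordan block and $\Lc=\gr N+(\{0\}\times\Span\{e_1\})$ one gets $\ker\Lc=\Span\{e_1,e_2\}$, $\ker\Lc^2=\C^3$, and for $x=e_3$ the admissible $y\in\ker\Lc$ with $(e_3,y)\in\Lc$ run over all of $e_2+\C e_1$, so the naive $\iota_1$ is genuinely multi-valued. Your worry about $k=1$ was well placed, but the difficulty is not confined to that case and your proposed remedy does not close it.

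The statement is nonetheless correct, and the relation-theoretic viewpoint you sketch at the end is the right vehicle---one only has to accept that the induced relation need not be single-valued. Setting
\[
\Mc:=\bigl\{([x],[y]):(x,y)\in\Lc,\ x\in\Rf_0^{k+1}(\Lc),\ y\in\Rf_0^k(\Lc)\bigr\}\ \subseteq\ \frac{\Rf_0^{k+1}(\Lc)}{\Rf_0^k(\Lc)}\times\frac{\Rf_0^k(\Lc)}{\Rf_0^{k-1}(\Lc)},
\]
one has $\dom\Mc$ equal to the whole first factor (by the very definition of $\Rf_0^{k+1}$) and $\ker\Mc=\{0\}$ (this is exactly your injectivity computation). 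From $\dim\dom\Mc+\dim\mul\Mc=\dim\Mc=\dim\ran\Mc+\dim\ker\Mc$ one obtains $w_{k+1}(\Lc;0)\leq\dim\ran\Mc\leq w_k(\Lc;0)$ directly. To exhibit an actual injective linear map one then chooses a linear complement of $\{0\}\times\mul\Mc$ inside $\Mc$; that complement is the graph of an injection. This non-canonical choice is what both the paper's argument and yours tacitly presuppose.
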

\begin{proof}
Assume without restriction that $\lambda=0$.
Let $[x_{k+1}]\in \tfrac{\Rf_{0}^{k+1}(\Lc)}{\Rf_{0}^{k}(\Lc)}$ with
$[x_{k+1}]\neq 0$. Then there exist $x_1,\ldots,x_k\in X\setminus\{0\}$
such that 
$$
(x_{k+1},x_{k}),\ldots(x_1,0)\in\Lc.
$$
Hence $x_k\in\Rf_0^k(\Lc)\setminus\Rf_0^{k-1}(\Lc)$. The mapping
$\iota_k$ is defined by 
$$
\iota_k([x_{k+1}])=[x_k]\quad \mbox{and}\quad\iota_k(0)=0.
$$
It is indeed linear and injective since $\iota([x_{k+1}])=[x_k]=0$ implies that $x_k\in\Rf_0^{k-1}(\Lc)$ and therefore $x_{k+1}\in\Rf_0^{k}(\Lc)$ and hence $[x_{k+1}]=0$.
\end{proof}

Next, we show that the Weyr characteristic of the kernel and range
representations of an operator pencil coincide. This is one the main results. For this, we introduce the notion of 
the \emph{singular subspace}. It is given by
\[
\Rc_c(\Lc):=\Rf_0(\Lc)\cap\Rf_\infty(\Lc).
\]
We recall a result from \cite[Theorem~4.3]{BergdeSn22}.
\begin{prop}
\label{prop:sing}
Let $\Lc$ be a linear relation in a vector space $X$. Then for all $\lambda,\mu\in\C\cup\{\infty\}$ with $\lambda\neq\mu$ it holds  $\Rc_c(\Lc)=\Rf_{\lambda}(\Lc)\cap\Rf_\mu(\Lc)$. In particular, if $\mu\in\rho(\Lc)$ we have $\Rf_\mu(\Lc)=\{0\}$ and $\Rc_c(\Lc)=\{0\}$.
\end{prop}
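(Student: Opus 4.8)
The plan is to deduce the whole statement from the single containment
\[
\Rf_\alpha(\Lc)\cap\Rf_\beta(\Lc)\subseteq\Rf_\nu(\Lc)\qquad\text{for all distinct }\alpha,\beta\in\C\cup\{\infty\}\text{ and every }\nu\in\C\cup\{\infty\}.
\]
Granting this, applying it to the pair $(\lambda,\mu)$ with $\nu\in\{0,\infty\}$ gives $\Rf_\lambda(\Lc)\cap\Rf_\mu(\Lc)\subseteq\Rf_0(\Lc)\cap\Rf_\infty(\Lc)=\Rc_c(\Lc)$, and applying it to the pair $(0,\infty)$ with $\nu\in\{\lambda,\mu\}$ gives the reverse inclusion $\Rc_c(\Lc)\subseteq\Rf_\lambda(\Lc)\cap\Rf_\mu(\Lc)$. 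For the ``in particular'' part I would first note the elementary fact that a resolvent point $\nu$ has $\Rf_\nu(\Lc)=\{0\}$: from $\ker(\Lc-\nu)=\{0\}$, $\ran(\Lc-\nu)=X$ (resp.\ $\mul\Lc=\{0\}$, $\dom\Lc=X$ when $\nu=\infty$) one gets $\ker(\Lc-\nu)^k=\{0\}$ (resp.\ $\mul\Lc^k=\{0\}$) for all $k$ by an obvious induction; then taking $\mu:=\nu$ in the equality just proved yields $\Rc_c(\Lc)=\Rf_\lambda(\Lc)\cap\{0\}=\{0\}$.

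To prove the containment I would reduce to a normal form. If $\nu\in\{\alpha,\beta\}$ it is trivial, so let $\alpha,\beta,\nu$ be pairwise distinct. Root subspaces are covariant under the M\"obius group acting on linear relations, a group generated by the translations $\Lc\mapsto\Lc+c$ (under which $\Rf_\xi(\Lc+c)=\Rf_{\xi-c}(\Lc)$ and $\Rf_\infty(\Lc)$ is unchanged) and the inversion $\Lc\mapsto\Lc^{-1}$ (which interchanges $\Rf_0(\Lc)$ and $\Rf_\infty(\Lc)$ and satisfies $\Rf_\xi(\Lc^{-1})=\Rf_{1/\xi}(\Lc)$ for $\xi\neq0,\infty$); under all these operations the subspaces of $X$ themselves do not change, only their eigenvalue labels. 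Composing a translation, the inversion, and another translation I can therefore assume $\alpha=0$, $\beta=\infty$ and $\nu\in\C\setminus\{0\}$. Verifying these covariance identities is a short but necessary first calculation.

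The core is then: if $x\in\Rf_0(\Lc)\cap\Rf_\infty(\Lc)$ and $\nu\neq0$, then $x\in\Rf_\nu(\Lc)$. Since $x\in\mul\Lc^q$ and $x\in\ker\Lc^p$ for some $p,q\geq1$, there are $\Lc$-chains $0=b_0,b_1,\dots,b_q=x$ and $x=a_p,a_{p-1},\dots,a_0=0$; concatenating them yields a \emph{singular chain} $z_0=0,z_1,\dots,z_{n-1},z_n=0$ with $(z_j,z_{j+1})\in\Lc$ for all $j$ and $x=z_q$ (where $n=p+q$) --- which is exactly why $\Rf_0(\Lc)\cap\Rf_\infty(\Lc)$ is called the singular subspace. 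I would encode this chain in the polynomial $\hat Z(u):=\sum_{j=1}^{n-1}z_j\,u^{\,n-1-j}$: summing the chain relations after multiplying the $j$-th by $t^{\,j}$ gives $\bigl(\sum_jt^jz_j,\ t^{-1}\sum_jt^jz_j\bigr)\in\Lc$ for $t\neq0$, and after the substitution $u=1/t$ and rescaling by $u^{\,n-1}$ this becomes $(\hat Z(u),u\hat Z(u))\in\Lc$ for all $u\neq0$; the vanishing of $z_0$ and $z_n$ makes $\hat Z$ a genuine polynomial and also covers $u=0$, because $\hat Z(0)=z_{n-1}\in\ker\Lc$ thanks to $(z_{n-1},z_n)=(z_{n-1},0)\in\Lc$. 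Hence $\hat Z(u)\in\ker(\Lc-u)$ for every $u\in\C$.

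The last step shifts the spectral parameter. With $\Mc:=\Lc-\nu$ and $v(w):=\hat Z(\nu+w)$ one has $v(w)\in\ker(\Mc-w)$, i.e.\ $(v(w),w\,v(w))\in\Mc$, for all $w\in\C$. Since a polynomial curve contained in the linear subspace $\Mc$ has all its coefficients in $\Mc$, and the coefficient of $w^k$ in $(v(w),w\,v(w))$ equals $(v_k,v_{k-1})$ (with $v_{-1}:=0$), the sequence $\dots,v_1,v_0,0$ is an $\Mc$-chain, so every $v_k\in\ker\Mc^{k+1}\subseteq\Rf_0(\Mc)=\Rf_\nu(\Lc)$. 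As the passage from $(z_1,\dots,z_{n-1})$ to the coefficients of $\hat Z(\nu+w)$ is unitriangular, each $z_j$ --- in particular $x=z_q$ --- is a linear combination of the $v_k$ and hence lies in $\Rf_\nu(\Lc)$. The step I expect to be most delicate is precisely the bookkeeping of this generating function: one must use that the singular chain vanishes at \emph{both} ends, so that $\hat Z$ really is a polynomial and the identity $\hat Z(u)\in\ker(\Lc-u)$ holds for \emph{all} $u$ (a direct induction on the chain length runs into the same difficulty, since the distinguished vertex $x=z_q$ lies in the interior of the chain).
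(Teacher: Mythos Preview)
The paper does \emph{not} prove this proposition at all: it is quoted verbatim as ``a result from \cite[Theorem~4.3]{BergdeSn22}'' and then used. So there is nothing to compare your argument against in the paper itself; what one can compare it to is the proof in the cited reference, and your approach via singular chains together with the polynomial (generating-function) trick is exactly the standard route taken there. Your reduction of the full statement to the single containment $\Rf_\alpha(\Lc)\cap\Rf_\beta(\Lc)\subseteq\Rf_\nu(\Lc)$ is clean, and the core computation---encoding a singular chain $0=z_0,\dots,z_n=0$ in $\hat Z(u)=\sum_{j=1}^{n-1}z_j u^{\,n-1-j}$, showing $(\hat Z(u),u\hat Z(u))\in\Lc$ for all $u$, shifting to $\Lc-\nu$, and reading off that every coefficient of the shifted polynomial lies in $\Rf_\nu(\Lc)$---is correct and nicely organised.

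One caveat worth flagging: the M\"obius covariance step is heavier than the phrase ``a short but necessary first calculation'' suggests. The identities $\Rf_\xi(\Lc+c)=\Rf_{\xi-c}(\Lc)$ for finite $\xi$ and $\Rf_0(\Lc^{-1})=\Rf_\infty(\Lc)$ are immediate, but the remaining two---$\Rf_\infty(\Lc+c)=\Rf_\infty(\Lc)$ and $\Rf_\xi(\Lc^{-1})=\Rf_{1/\xi}(\Lc)$ for $\xi\neq 0,\infty$---genuinely require an inductive chain argument (for the first, one shows $\mul\Lc^k=\mul(\Lc+c)^k$ by building a new chain via a Pascal-type recursion; for the second one rescales the chain by powers of $\xi$). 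Both are true, so there is no gap, but in a write-up you should either supply these inductions or cite them; otherwise a reader may suspect circularity, since the translation-invariance of $\Rf_\infty$ is close in spirit to the very statement being proved.
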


\begin{theorem}
\label{thm:eq_weyr}
Let $\Ac(\lambda)=\lambda E-A$ satisfies \eqref{torte}. Then for all $\lambda\in\C\cup\{\infty\}$ the following Weyr indices coincide for all $k\geq 1$
\[
w_k(E|_{\dom A}^{-1}A;\lambda)=w_k(\Ac;\lambda)=w_k(AE^{-1};\lambda).
\]
\end{theorem}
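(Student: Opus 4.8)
The plan is to observe that the first equality $w_k(E|_{\dom A}^{-1}A;\lambda)=w_k(\Ac;\lambda)$ is nothing but the definition of the Weyr characteristic of a pencil, so the whole content lies in proving $w_k(E|_{\dom A}^{-1}A;\lambda)=w_k(AE^{-1};\lambda)$. Write $\Lc:=E|_{\dom A}^{-1}A$ and $\Mc:=AE^{-1}$. By Lemma~\ref{lem:root_ker_ran}, for finite $\lambda$ one has $\Rf_\lambda^k(\Mc)=E\,\Rf_\lambda^k(\Lc)$ and for $\lambda=\infty$ one has $\Rf_\infty^k(\Mc)=A\,\Rf_\infty^k(\Lc)$, so the linear map $E$ (respectively $A$) sends the nested chain of root subspaces of $\Lc$ onto that of $\Mc$. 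Everything reduces to showing that this map is \emph{injective} on the relevant root subspace; once that is done it becomes a bijection carrying $\Rf^{k-1}$ onto $\Rf^{k-1}$, hence induces an isomorphism of the quotients defining the Weyr indices.

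I would treat the finite case first. Consider $E\colon\Rf_\lambda^k(\Lc)\to\Rf_\lambda^k(\Mc)$, which is onto by Lemma~\ref{lem:root_ker_ran}; its kernel is $\ker E\cap\Rf_\lambda^k(\Lc)$. By \eqref{DuaLipa} we have $\ker E\cap\dom A=\mul\Lc=\Rf_\infty^1(\Lc)\subseteq\Rf_\infty(\Lc)$, so $\ker E\cap\Rf_\lambda^k(\Lc)\subseteq\Rf_\lambda(\Lc)\cap\Rf_\infty(\Lc)$. Since \eqref{torte} forces $\rho(\Ac)\neq\emptyset$, Proposition~\ref{prop:spectrum} gives $\rho(\Lc)\neq\emptyset$, and then Proposition~\ref{prop:sing} yields $\Rc_c(\Lc)=\Rf_\lambda(\Lc)\cap\Rf_\infty(\Lc)=\{0\}$. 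Hence $E$ is injective on $\Rf_\lambda^k(\Lc)$, so it is a linear bijection onto $\Rf_\lambda^k(\Mc)$; by Lemma~\ref{lem:root_ker_ran} with $k-1$ in place of $k$ (the case $k=1$ being trivial as $\Rf_\lambda^0=\{0\}$) it carries $\Rf_\lambda^{k-1}(\Lc)$ onto $\Rf_\lambda^{k-1}(\Mc)$, and therefore descends to an isomorphism
\[
\frac{\Rf_\lambda^k(\Lc)}{\Rf_\lambda^{k-1}(\Lc)}\;\cong\;\frac{\Rf_\lambda^k(\Mc)}{\Rf_\lambda^{k-1}(\Mc)},
\]
which gives $w_k(\Lc;\lambda)=w_k(\Mc;\lambda)$.

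The case $\lambda=\infty$ is handled the same way with $A$ replacing $E$: the map $A\colon\Rf_\infty^k(\Lc)\to\Rf_\infty^k(\Mc)$ is onto by Lemma~\ref{lem:root_ker_ran}, and $\ker A=\ker\Ac(0)=\ker\Lc=\Rf_0^1(\Lc)\subseteq\Rf_0(\Lc)$ by \eqref{kern}, so $\ker A\cap\Rf_\infty^k(\Lc)\subseteq\Rf_0(\Lc)\cap\Rf_\infty(\Lc)=\Rc_c(\Lc)=\{0\}$ again by Proposition~\ref{prop:sing}. Thus $A$ is injective on $\Rf_\infty^k(\Lc)$, carries $\Rf_\infty^{k-1}(\Lc)$ onto $\Rf_\infty^{k-1}(\Mc)$, and induces the desired quotient isomorphism, so $w_k(\Lc;\infty)=w_k(\Mc;\infty)$.

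The only genuinely nontrivial point — and hence the main obstacle — is the injectivity step: one must recognise that the obstruction to injectivity of $E$ (or $A$) on a root subspace at $\lambda$ is precisely a nonzero element lying simultaneously in a root subspace at $\lambda$ and in a root subspace at the "opposite" point ($\infty$, resp.\ $0$), i.e.\ a nonzero element of the singular subspace $\Rc_c(\Lc)$, which is ruled out by regularity of the pencil via Propositions~\ref{prop:spectrum} and~\ref{prop:sing}. The remainder is bookkeeping with the images of Jordan chains already encoded in Lemma~\ref{lem:root_ker_ran}.
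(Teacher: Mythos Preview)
Your proof is correct and follows essentially the same route as the paper: reduce to injectivity of $E$ on $\Rf_\lambda^k(\Lc)$ via $\ker E|_{\dom A}=\mul\Lc\subseteq\Rf_\infty(\Lc)$ and Proposition~\ref{prop:sing}, then invoke Lemma~\ref{lem:root_ker_ran} to identify the quotients. In fact you are slightly more careful than the paper, which only writes out the argument for finite $\lambda$ (reducing to $\lambda=0$) and never returns to the case $\lambda=\infty$; your treatment of that case, replacing $E$ by $A$ and using $\ker A=\ker\Lc\subseteq\Rf_0(\Lc)$ together with $\Rf_0(\Lc)\cap\Rf_\infty(\Lc)=\Rc_c(\Lc)=\{0\}$, is exactly what is needed to complete the statement.
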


\begin{proof}
Again, we consider only the case $\lambda =0$. The first equation holds by definition. The assumption \eqref{torte}  yields the existence of some $\mu\in\rho(E,A)$ and hence by Proposition~\ref{prop:spectrum}, $\mu\in\rho(E|_{\dom A}^{-1}A)$. Now  Proposition~\ref{prop:sing} gives 
\[
\{0\}=\Rc_c(E|_{\dom A}^{-1}A)=\Rf_0(E|_{\dom A}^{-1}A)\cap\Rf_\infty(E|_{\dom A}^{-1}A).
\]
As $\ker E|_{\dom A}=\mul (E|_{\dom A}^{-1}A)\subseteq \Rf_\infty(E|_{\dom A}^{-1}A)$
and by \eqref{DuaLipa}
\[
 \Rf_0(E|_{\dom A}^{-1}A)\cap\ker E|_{\dom A} \subseteq
 \Rf_0(E|_{\dom A}^{-1}A)\cap\Rf_\infty(E|_{\dom A}^{-1}A) =\{0\}
\]
and hence $E$ is injective on $\Rf_0^k(E|_{\dom A}^{-1}A)$. Now, 
the second equation follows from Lemma~\ref{lem:root_ker_ran}
\begin{align*}
w_k(AE^{-1};0)&=\dim\frac{\Rf_0^k(AE^{-1})}{\Rf_0^{k-1}(AE^{-1})}=\dim\frac{E\Rf_0^k(E|_{\dom A}^{-1}A)}{E\Rf_0^{k-1}(E|_{\dom A}^{-1}A)}\\&=\dim\frac{\Rf_0^k(E|_{\dom A}^{-1}A)}{\Rf_0^{k-1}(E|_{\dom A}^{-1}A)}=w_k(E|_{\dom A}^{-1}A;0).
\end{align*}
\end{proof}

\section{Weyr indices under one-dimensional perturbations}

In this section we investigate one-dimensional perturbations of regular operator pencils based on the underlying kernel and range representations. 
We derive formulas for the change of the Weyr characteristic from a combination of Theorem~\ref{thm:eq_weyr} and a recent perturbation result for linear relations~\cite{LebeMart2018}.
The following notion of perturbations for linear relations was used in  \cite{AzizBehr09,LebeMart2018}.

\begin{defi}
\label{defi:rkone}
Let $k\in\N$. The subspaces $\Lc,\Mc\subseteq X\times X$ are called \emph{one-dimensional perturbations} of each other, if 
\begin{align*}
\dim(\Lc,\Mc):=\max\left\{\dim\frac{\Lc}{\Lc\cap\Mc},\dim\frac{\Mc}{\Lc\cap\Mc}\right\}= 1.
\end{align*}
\end{defi}
In the special case where $\Lc$ and $\Mc$ are graphs of closed operators, i.e.\ $\Lc=\gr A$, $\Mc=\gr\hat A$ and $\mu\in\rho(A)\cap\rho(\hat A)$ then Definition~\ref{defi:rkone} is equivalent to
\[
\dim\ran((A-\mu)^{-1}-(\hat A-\mu)^{-1})=1
\]
and $Ax=\hat Ax$ for all $x\in\dom A\cap\dom \hat A$.

In the following we consider two types of perturbations of the regular operator pencil $\Ac(\lambda)=\lambda E-A$. Let $u,w\in Y$ and  $v',w'\in X'$ where $X'$ denotes the space of all continuous linear functionals on $X$. Then we consider 
\begin{subequations}
\label{rk_one}
\begin{align}
\hat \Ac(\lambda)=\Ac(\lambda)+\lambda uv'-wv'.\label{rk_one_v}\\
\hat \Ac(\lambda)=\Ac(\lambda)+\lambda uv'-uw',\label{rk_one_u}
\end{align}
\end{subequations}

In the proposition below, we show that the above perturbations 
are one-dimensional perturbations of their kernel or their range representation. 
\begin{prop}
\label{prop:rankoneisonedim}
Let $\Ac(\lambda)=\lambda E-A$ satisfy \eqref{torte}. Let $u,w\in Y$, $v'\in X'$. Then
\begin{align}
\label{dimkleinereins}
\dim(AE^{-1},(A+wv')(E+uv')^{-1})\leq 1.
\end{align}
Furthermore, if $u\in Y$ and $v',w'\in X'$ then 
\begin{align}
\dim(E|_{\dom A}^{-1}A,(E|_{\dom A}+uv')^{-1}(A+uw'))\leq 1.\label{dimkerkleinereins}
\end{align}
\end{prop}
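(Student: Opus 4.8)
The plan is to verify the two inequalities by a direct computation of the relevant quotient spaces, using the explicit descriptions of the kernel and range representations established in the previous sections. The key observation is that in each case the perturbation modifies $E$ and $A$ only along a one-dimensional functional $v'$ (in the range representation) or along a one-dimensional subspace $\mathrm{span}\{u\}$ (in the kernel representation), so the two linear relations will agree on a subspace whose codimension in each is at most one.

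For \eqref{dimkleinereins}, I would start from \eqref{Bella}, which gives $AE^{-1}=\{(Ex,Ax):x\in\dom A\}$ and, for the perturbed pencil, $(A+wv')(E+uv')^{-1}=\{((E+uv')x,(A+wv')x):x\in\dom A\}=\{(Ex+v'(x)u,\ Ax+v'(x)w):x\in\dom A\}$. Now restrict to the subspace $\Ker v'\cap\dom A$: on this subspace $v'(x)=0$, so both representations reduce to $\{(Ex,Ax):x\in\Ker v'\cap\dom A\}$. Call this common subspace $\Lc_0$. It then suffices to show that $\Lc_0$ has codimension at most one in $AE^{-1}$ and at most one in $(A+wv')(E+uv')^{-1}$: indeed $\Lc_0\subseteq AE^{-1}\cap(A+wv')(E+uv')^{-1}$, and $\dim(\Lc,\Mc)$ is monotone under shrinking the common part, so the bound follows. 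But $\dim\bigl((AE^{-1})/\Lc_0\bigr)$ is bounded by $\dim\bigl(\dom A/(\Ker v'\cap\dom A)\bigr)$, which is $0$ if $v'|_{\dom A}=0$ and $1$ otherwise, since $\Ker v'\cap\dom A$ is the kernel of the functional $v'|_{\dom A}$ on $\dom A$; the same bound holds for the perturbed relation by the identical argument. This gives \eqref{dimkleinereins}.

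For \eqref{dimkerkleinereins} the argument is dual, now using \eqref{Wolfine}: $E|_{\dom A}^{-1}A=\{(x,z)\in\dom A\times\dom A:Ax=Ez\}$, while $(E|_{\dom A}+uv')^{-1}(A+uw')=\{(x,z)\in\dom A\times\dom A:(A+uw')x=(E+uv')z\}$, i.e.\ $Ax-Ez=(v'(z)-w'(x))u$. Set $\mathcal{N}:=\{(x,z)\in\dom A\times\dom A: Ax=Ez\text{ and }v'(z)=w'(x)\}$; on $\mathcal N$ both relations coincide (the right-hand side being $0$), and $\mathcal N$ has codimension at most one inside $E|_{\dom A}^{-1}A$ because it is cut out by the single linear functional $(x,z)\mapsto v'(z)-w'(x)$ restricted to $E|_{\dom A}^{-1}A$. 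One checks likewise that $\mathcal N$ has codimension at most one inside $(E|_{\dom A}+uv')^{-1}(A+uw')$: a pair $(x,z)$ in the perturbed relation with $v'(z)=w'(x)$ satisfies $Ax=Ez$, hence lies in $\mathcal N$, so again a single functional controls the quotient. Using $\mathcal N\subseteq E|_{\dom A}^{-1}A\cap(E|_{\dom A}+uv')^{-1}(A+uw')$ and monotonicity of $\dim(\cdot,\cdot)$ as above yields \eqref{dimkerkleinereins}.

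The only point requiring a little care — and the place where I expect the argument to need the most attention — is the bookkeeping showing that shrinking the common subspace can only increase $\dim(\Lc,\Mc)$, so that controlling the codimension of $\Lc_0$ (resp.\ $\mathcal N$) inside \emph{each} of the two relations is genuinely enough; one should spell out that $\dim(\Lc/(\Lc\cap\Mc))\le\dim(\Lc/\Lc_0)$ whenever $\Lc_0\subseteq\Lc\cap\Mc$, and symmetrically. Everything else is the two short explicit computations above, together with the elementary fact that the kernel of a nonzero linear functional on a vector space has codimension one.
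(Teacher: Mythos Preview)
Your argument is correct and follows essentially the same route as the paper: for the range representation you both pass to the common subspace $\{(Ex,Ax):x\in\ker v'\cap\dom A\}$ (the paper writes this via an explicit splitting $\dom A=(\ker v'\cap\dom A)\dotplus\Span\{x_0\}$), and for the kernel representation you both identify the intersection as the zero set of the single functional $(x,z)\mapsto v'(z)-w'(x)$ inside $E|_{\dom A}^{-1}A$. Your added remark on monotonicity, namely $\dim\bigl(\Lc/(\Lc\cap\Mc)\bigr)\le\dim(\Lc/\Lc_0)$ whenever $\Lc_0\subseteq\Lc\cap\Mc$, just makes explicit a step the paper leaves implicit.
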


\begin{proof}
Assume that $v'\neq 0$. There exists $x_0\in \dom A$ such that 
$X=\ker v'\dotplus \Span\{x_0\}$.
 The same for $\dom A$ gives the decomposition
\begin{align}
\label{space_dec}
\dom A=(\ker v'\cap\dom A)\dotplus \Span\{x_0\}.
\end{align}
Using the definition of the range representation and \eqref{space_dec}, we find
\begin{align*}
\Lc&:=AE^{-1}=\ran\begin{smallbmatrix}E\\ A\end{smallbmatrix}=\begin{smallbmatrix}
	E\\A
\end{smallbmatrix}\left(\ker v'\cap\dom A\right)+ \Span\left\{\begin{smallpmatrix}Ex_0\\Ax_0\end{smallpmatrix}\right\},\\
\Mc&:=\ran\begin{smallbmatrix}E+uv'\\ A+wv'\end{smallbmatrix}=\begin{smallbmatrix}
E\\A
\end{smallbmatrix}\left(\ker v'\cap\dom A\right)+ \Span\left\{\begin{smallpmatrix}Ex_0+uv'(x_0)\\Ax_0+wv'(x_0)\end{smallpmatrix}\right\}.
\end{align*}
This implies $\begin{smallbmatrix}
E\\A
\end{smallbmatrix}\left(\ker v'\cap\dom A\right)\subseteq \Lc\cap\Mc$. Hence 
\begin{align} \label{LilDurk}
\dim\frac{\Lc}{\Lc\cap\Mc}&\leq 1
\quad \mbox{and} \quad
\dim\frac{\Mc}{\Lc\cap\Mc}\leq 1.
\end{align}

To prove \eqref{dimkerkleinereins} we assume that $u\neq 0$. By
\eqref{WolfineII}
\begin{align*}
\Lc&:=E|_{\dom A}^{-1}A=\ker[A,-E|_{\dom A}],\\ 
\Mc&:=(E|_{\dom A}+uv')^{-1}(A+uw') =\ker[A+uw',-E|_{\dom A}-uv'].
\end{align*}
where we used that $\dom (A+uw')=\dom A$. Since $u\neq 0$ we have
\[
\Lc\cap\Mc=\{(x,y)\in E|_{\dom A}^{-1}A~|~v'(y)=w'(x)\}.
\]
This implies \eqref{LilDurk}.
\end{proof}
In Proposition \ref{prop:rankoneisonedim} it is shown that the perturbation given
by \eqref{rk_one_v} (or \eqref{rk_one_u}) produces a one dimensional perturbation
of the corresponding range representations, cf.\ \eqref{dimkleinereins},
(resp.\ of the corresponding kernel representations, cf.\ \eqref{dimkerkleinereins}).
But a perturbation given
by \eqref{rk_one_v} (or \eqref{rk_one_u}) can produce a two dimensional perturbation
if one considers the corresponding kernel representation 
(resp.\  the corresponding range representation). This is the content of the following example.

\begin{ex}
Consider $X=Y=\C^2$ and $E=A=\begin{smallbmatrix}0&0\\0&0
\end{smallbmatrix}\in\C^{2\times 2}$, $u=v=\begin{smallpmatrix}
1\\0
\end{smallpmatrix}$ and $w=\begin{smallpmatrix}
0\\1
\end{smallpmatrix}$. Furthermore, we write $v^*,w^*$ for the functionals $v', w'$ which is the complex-conjugate vector  transposed. Then 
\begin{align*}
    (A+uw^*)(E+uv^*)^{-1}=\ran\begin{smallbmatrix}
    1&0\\0&0\\
0&1\\0&0    
    \end{smallbmatrix}=\Span\left\{\begin{smallpmatrix}
    1\\0\\0\\0
    \end{smallpmatrix},\begin{smallpmatrix}
    0\\0\\1\\0
    \end{smallpmatrix}\right\},
\end{align*}
which is a two-dimensional subspace, whereas $AE^{-1}$ has dimension zero, i.e.\ \eqref{dimkleinereins} does not hold for arbitrary rank-one perturbations. Similarly, 
\begin{align*}
    (E+uv^*)^{-1}(A+wv^*)=\ker\begin{smallbmatrix}
    0&0&-1&0\\ 1&0&0&0
    \end{smallbmatrix}
    =\Span\left\{\begin{smallpmatrix}
    0\\1\\0\\0
    \end{smallpmatrix},\begin{smallpmatrix}
    0\\0\\0\\1
    \end{smallpmatrix}\right\},
\end{align*}
which is a two-dimensional perturbation of $E|_{\dom A}^{-1}A=\C^4$.
\end{ex}

In the proposition below, we recall bounds for the Weyr characteristic of linear relations with 
trivial singular subspace under one-dimensional perturbations~\cite[Corollary~4.6]{LebeMart2018}. 
\begin{prop}
\label{prop:francisco}
Let $\Lc$ and $\Mc$ be linear relations in a vector space $X$ with $\Rc_c(\Lc)=\Rc_c(\Mc)=\{0\}$, $\dim(\Lc,\Mc)\leq 1$ and Weyr characteristics $w(\Lc;\lambda)=(w_i(\Lc;\lambda))_{i\geq 1}$ and $w(\Mc;\lambda)=(w_i(\Mc;\lambda))_{i\geq 1}$. Then for all $k\geq 1$ and all $\lambda\in\C\cup\{\infty\}$ for which $w_k(\Lc;\lambda)$ is finite, it holds
\[
\left|w_k(\Lc;\lambda)-w_k(\Mc;\lambda)\right|\leq 1.
\]
\end{prop}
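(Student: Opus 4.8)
The plan is to strip the statement down to a one-point, one-sided comparison and then to isolate the genuinely hard inequality in an enlargement lemma. First I would reduce to the finite point $\lambda=0$. Since $\Rf_\lambda^k(\Lc)=\Rf_0^k(\Lc-\lambda)$, the shear $(x,y)\mapsto(x,y-\lambda x)$ is a linear automorphism of $X\times X$ that sends $\Lc,\Mc,\Lc\cap\Mc$ to $\Lc-\lambda,\Mc-\lambda,(\Lc-\lambda)\cap(\Mc-\lambda)$; it therefore preserves $\dim(\cdot,\cdot)\le 1$, and, since $\Rf_\mu(\Lc-\lambda)=\Rf_{\mu+\lambda}(\Lc)$ for finite $\mu$, Proposition~\ref{prop:sing} applied twice gives $\Rc_c(\Lc-\lambda)=\Rf_\lambda(\Lc)\cap\Rf_{\lambda+\nu}(\Lc)=\Rc_c(\Lc)=\{0\}$ for any finite $\nu\neq 0$. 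Thus all hypotheses survive and we may take $\lambda=0$. The point $\lambda=\infty$ is reduced to this case by passing to the inverses: the flip $(x,y)\mapsto(y,x)$ is again an automorphism, so $\dim(\Lc^{-1},\Mc^{-1})=\dim(\Lc,\Mc)\le 1$, one has $\Rf_\infty^k(\Lc)=\Rf_0^k(\Lc^{-1})$, and $\Rc_c(\Lc^{-1})=\Rf_\infty(\Lc)\cap\Rf_0(\Lc)=\Rc_c(\Lc)=\{0\}$.

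Next I would introduce the common part $\mathcal N:=\Lc\cap\Mc$. From $\mathcal N\subseteq\Lc$ we get $\Rf_0(\mathcal N)\subseteq\Rf_0(\Lc)$ and $\Rf_\infty(\mathcal N)\subseteq\Rf_\infty(\Lc)$, hence $\Rc_c(\mathcal N)=\{0\}$; and by Definition~\ref{defi:rkone} the hypothesis $\dim(\Lc,\Mc)\le 1$ says exactly $\dim(\Lc/\mathcal N)\le 1$ and $\dim(\Mc/\mathcal N)\le 1$. The whole statement then follows from the following \emph{enlargement lemma}: if $\mathcal N\subseteq\Lc$ with $\dim(\Lc/\mathcal N)\le 1$ and $\Rc_c(\Lc)=\{0\}$, then $w_k(\mathcal N;0)\le w_k(\Lc;0)\le w_k(\mathcal N;0)+1$ for all $k\ge 1$. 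Indeed, applying it to $\mathcal N\subseteq\Lc$ and to $\mathcal N\subseteq\Mc$ places both $w_k(\Lc;0)$ and $w_k(\Mc;0)$ in the two-element set $\{w_k(\mathcal N;0),\,w_k(\mathcal N;0)+1\}$, so they differ by at most one; and the lower bound turns finiteness of $w_k(\Lc;0)$ into finiteness of $w_k(\mathcal N;0)$, whence $w_k(\Mc;0)\le w_k(\mathcal N;0)+1<\infty$ as well.

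To prove the enlargement lemma I would track the defect $c_j:=\dim\ker\Lc^j-\dim\ker\mathcal N^j$, using $w_k(\Lc;0)=\dim\ker\Lc^k-\dim\ker\Lc^{k-1}$ and the analogue for $\mathcal N$. Since $\ker\mathcal N^j\subseteq\ker\Lc^j$ we have $c_j\ge 0$, so the lemma is equivalent to showing that $c_j\le 1$ and that $(c_j)_j$ is non-decreasing, because then $c_j\in\{0,1\}$ and $w_k(\Lc;0)-w_k(\mathcal N;0)=c_k-c_{k-1}\in\{0,1\}$. For the bound $c_j\le 1$ I would write $\Lc=\mathcal N\dotplus\Span\{e\}$ and follow an element $x\in\ker\Lc^j$ along a chain $x=x_0,\dots,x_j=0$ with $(x_{i-1},x_i)\in\Lc$, recording the coefficient of $e$ in each link; a single extra generator should cap the growth of each iterated kernel at one dimension. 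The monotonicity of $(c_j)_j$ is where $\Rc_c(\Lc)=\{0\}$ is indispensable: it rules out the extra generator contributing to the root space at $0$ and at $\infty$ at once, which is what forces the defect, once it appears, to persist.

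I expect these last two points to be the main obstacle, because the chains realizing $\ker\Lc^j$ are highly non-unique, so the bookkeeping that isolates the single generator's contribution and shows it is simultaneously bounded by one and monotone in $j$ is delicate; this is precisely the perturbation analysis carried out in \cite{LebeMart2018}, on which the stated bound rests.
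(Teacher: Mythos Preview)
The paper does not give its own proof of this proposition: it is simply quoted as \cite[Corollary~4.6]{LebeMart2018}. Your proposal is therefore not competing with any argument in the paper; it is an outline of how one might reprove the cited result.

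Your reductions are sound. The shear $(x,y)\mapsto(x,y-\lambda x)$ and the flip $(x,y)\mapsto(y,x)$ do preserve both $\dim(\Lc,\Mc)\le 1$ and, via Proposition~\ref{prop:sing}, the triviality of the singular subspace, so one may indeed take $\lambda=0$. Passing to $\mathcal N:=\Lc\cap\Mc$ and reducing the two-sided bound to an \emph{enlargement lemma} for a codimension-one inclusion $\mathcal N\subseteq\Lc$ is exactly the right move, and your reformulation in terms of the defects $c_j=\dim\ker\Lc^j-\dim\ker\mathcal N^j$ being $\{0,1\}$-valued and non-decreasing is equivalent to the claimed inequalities.

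However, as you yourself note, the two facts that actually carry the weight---that adjoining a single pair to $\mathcal N$ increases each iterated kernel by at most one dimension, and that once the defect appears it persists (which is where $\Rc_c(\Lc)=\{0\}$ enters)---are not proved in your sketch but deferred to \cite{LebeMart2018}. So your proposal and the paper end up in the same place: both invoke \cite{LebeMart2018} for the substantive perturbation analysis. What you have added is correct scaffolding, not an independent proof.
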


The following result is the main result of this paper.
It describes the maximal change of the Weyr characteristic of a regular operator pencil
under rank-one perturbations. This result was obtained for matrix pencils in \cite{GernTrunk17} based on \cite[Lemma~2.1]{DopMordTe2008}. 
It is interesting to note that the same result is true for matrices $A$ and $E$,
where $E$ equals the identity, see \cite{Savc03} and also \cite{BehrLebe15,HoerMell94,Thom80}. 
\begin{theorem}
Let $\Ac(\lambda)=\lambda E-A$ satisfies \eqref{torte} and consider $\hat \Ac(\lambda)=\lambda\hat E-\hat A$ given by \eqref{rk_one} with Weyr characteristics  $w(E,A;\lambda)=(w_i(E,A;\lambda))_{i\geq 1}$ and  $w(\hat E,\hat A;\lambda)=(w_i(\hat E,\hat A;\lambda))_{i\geq 1}$. 
If, in addition, $\hat \Ac(\lambda)$ is regular and $w_k(E,A;\lambda)$ is finite for some $k\geq 1$ and $\lambda\in\C\cup\{\infty\}$ then 
	\begin{align}
	\label{1stage}
	\left|w_k(\hat E,\hat A;\lambda)-w_k(E,A;\lambda)\right|\leq 1,\\[1ex]
		\label{trunkungl}
	|\dim \Rf_{\lambda}^k(\hat E,\hat A)-\dim \Rf^k_\lambda(E,A)|\leq k.
	\end{align}
\end{theorem}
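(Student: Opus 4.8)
The plan is to reduce the statement to two results already established: Theorem~\ref{thm:eq_weyr}, which identifies the Weyr characteristic of the pencil with that of \emph{both} its kernel and its range representation, and Proposition~\ref{prop:francisco}, which bounds the change of the Weyr characteristic of a linear relation with trivial singular subspace under a one-dimensional perturbation. I would carry a general $\lambda\in\C\cup\{\infty\}$ throughout (for finite $\lambda$ one could alternatively reduce to $\lambda=0$ by replacing $A$ with $A-\lambda E$, as in the earlier proofs).

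For \eqref{1stage} I would first split according to which perturbation in \eqref{rk_one} produces $\hat\Ac$, since, as the Example preceding the theorem shows, only one of the two representations stays one-dimensional for a given perturbation type. If $\hat\Ac$ comes from \eqref{rk_one_v}, so $\hat E=E+uv'$ and $\hat A=A+wv'$, I set $\Lc:=AE^{-1}$ and $\Mc:=\hat A\hat E^{-1}$, linear relations in $Y$, and invoke \eqref{dimkleinereins} of Proposition~\ref{prop:rankoneisonedim} to get $\dim(\Lc,\Mc)\le 1$. If $\hat\Ac$ comes from \eqref{rk_one_u}, so $\hat E=E+uv'$, $\hat A=A+uw'$ and $\dom\hat A=\dom A$, I set $\Lc:=E|_{\dom A}^{-1}A$ and $\Mc:=\hat E|_{\dom\hat A}^{-1}\hat A$, linear relations in the vector space $\dom A$, and use \eqref{dimkerkleinereins} instead. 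In both cases $\Lc$ and $\Mc$ are relations in one and the same vector space. Next I would verify the hypotheses of Proposition~\ref{prop:francisco}: $\Ac$ is regular by \eqref{torte}, and $\hat\Ac$ is regular by assumption and satisfies \eqref{torte} as well (it has bounded $\hat E$, closed densely defined $\hat A$, and a finite resolvent point because $\rho(\hat\Ac)$ is open in $\C\cup\{\infty\}$); hence by Proposition~\ref{prop:spectrum} both $\Lc$ and $\Mc$ have nonempty resolvent set, so by Proposition~\ref{prop:sing} their singular subspaces $\Rc_c(\Lc)$ and $\Rc_c(\Mc)$ are trivial. Since $w_k(\Lc;\lambda)=w_k(E,A;\lambda)$ is finite by hypothesis and Theorem~\ref{thm:eq_weyr}, Proposition~\ref{prop:francisco} gives $|w_k(\Lc;\lambda)-w_k(\Mc;\lambda)|\le 1$, and reading this back through Theorem~\ref{thm:eq_weyr} (which also identifies $w_k(\Mc;\lambda)$ with $w_k(\hat E,\hat A;\lambda)$) is precisely \eqref{1stage}. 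The same argument yields $|w_j(\hat E,\hat A;\lambda)-w_j(E,A;\lambda)|\le 1$ for every index $j$ with $w_j(E,A;\lambda)$ finite, and, by the symmetry of Proposition~\ref{prop:francisco} in its two arguments, $w_j(E,A;\lambda)=\infty$ forces $w_j(\hat E,\hat A;\lambda)=\infty$.

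For \eqref{trunkungl} I would use the telescoping identity $\dim\Rf_\lambda^k(E,A)=\sum_{j=1}^k w_j(E,A;\lambda)$, valid as an identity of cardinals since $\Rf_\lambda^0=\{0\}$ and $\dim V=\dim W+\dim(V/W)$ for $W\subseteq V$, and since $\Rf_\lambda^k(E,A)=\Rf_\lambda^k(E|_{\dom A}^{-1}A)$ was shown in the preceding section, together with the analogous identity for $\hat\Ac$. If $w_j(E,A;\lambda)$ is finite for all $j\le k$, then so is $w_j(\hat E,\hat A;\lambda)$ by the previous paragraph, and summing the $k$ bounds $|w_j(\hat E,\hat A;\lambda)-w_j(E,A;\lambda)|\le 1$ gives $|\dim\Rf_\lambda^k(\hat E,\hat A)-\dim\Rf_\lambda^k(E,A)|\le k$. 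Otherwise some $w_j(E,A;\lambda)$ with $j\le k$ is infinite, hence so is $w_j(\hat E,\hat A;\lambda)$, and then both $k$-th root subspaces are infinite-dimensional, so \eqref{trunkungl} holds trivially.

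I do not expect a serious obstacle; the real content sits in Theorem~\ref{thm:eq_weyr} and Proposition~\ref{prop:francisco}. The one point requiring care is the initial case split — matching each perturbation type with the correct (kernel or range) representation, the Example showing that the other choice can be genuinely two-dimensional — and the minor bookkeeping that regularity of $\hat\Ac$ indeed allows Propositions~\ref{prop:spectrum} and~\ref{prop:sing} to be applied to $\Mc$; the infinite-dimensional corner case in \eqref{trunkungl} is merely cosmetic.
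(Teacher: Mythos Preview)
Your proposal is correct and follows essentially the same route as the paper: the same case split matching \eqref{rk_one_v} with the range representation and \eqref{rk_one_u} with the kernel representation, then Proposition~\ref{prop:rankoneisonedim}, Proposition~\ref{prop:sing}, Proposition~\ref{prop:francisco}, and Theorem~\ref{thm:eq_weyr} in the same order, and the telescoping sum for \eqref{trunkungl}. You are in fact slightly more careful than the paper in two places: you explicitly verify that $\hat\Ac$ satisfies \eqref{torte} before invoking Proposition~\ref{prop:spectrum} for $\Mc$, and you handle the case where some $w_j(E,A;\lambda)$ with $j\le k$ is infinite in \eqref{trunkungl}, which the paper's proof leaves implicit.
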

\begin{proof}
If $\hat \Ac(\lambda)$ is given by \eqref{rk_one_v}, then $\hat A\hat E^{-1}=(A+wv')(E+uv')^{-1}$ is a one-dimensional perturbation of $AE^{-1}$ by Proposition~\ref{prop:rankoneisonedim}. Note that the regularity of $\lambda E-A$ and $\lambda\hat E -\hat A$ implies with Proposition~\ref{prop:sing} that  $\Rc_c(AE^{-1})=\Rc_c(\hat A\hat E^{-1})=\{0\}$. Invoking Proposition~\ref{prop:francisco}, the Weyr characteristics satisfy for all $k\geq 1$ and all $\lambda\in\C\cup\{\infty\}$
\[
|w_k((A+wv')(E+uv')^{-1};\lambda)-w_k(AE^{-1};\lambda)|\leq 1.
\]
By Theorem~\ref{thm:eq_weyr}, we have 
$$
w_k(AE^{-1};\lambda)=w_k(E,A;\lambda)\quad \mbox{and}\quad   
w_k(\hat A\hat E^{-1};\lambda)=w_k(\hat E,\hat A;\lambda).
$$

If $\hat \Ac(\lambda)$ is given by \eqref{rk_one_u}, then $\hat E|_{\dom A}^{-1}\hat A=(E|_{\dom A}+uv')^{-1}(A+uw')$ is a one-dimensional perturbation of $E|_{\dom A}^{-1}A$ and Proposition~\ref{prop:sing} implies 
$$
\Rc_c(E|_{\dom A}^{-1}A)=\Rc_c(\hat E|_{\dom A}^{-1}\hat A)=\{0\}.
$$
Hence, by Proposition~\ref{prop:francisco} and Theorem~\ref{thm:eq_weyr},
\begin{align*}
&|w_k(E+uv',A+uw';\lambda)-w_k(E,A;\lambda)|\\
&=|w_k((E+uv')^{-1}(A+uw');\lambda)-w_k(E|_{\dom A}^{-1}A;\lambda)|\leq 1.
\end{align*}
The inequality \eqref{trunkungl} follows from \eqref{1stage} and
\begin{align*}
\dim\Rf_{\lambda}^k(E,A)&=\dim\Rf_\lambda^1(E,A)+\ldots+ \dim\frac{\Rf_{\lambda}^k(E,A)}{\Rf_\lambda^{k-1}(E,A)}\\&=w_1(E,A;\lambda)+\ldots+w_k(E,A;\lambda).
\end{align*}
\end{proof}

\bibliographystyle{plain}
{\footnotesize
\bibliography{bibo}
}
\end{document}